\newcommand{\bd}{\begin{description}}
\newcommand{\ed}{\end{description}}
\newcommand{\bi}{\begin{itemize}}
\newcommand{\ei}{\end{itemize}}
\newcommand{\be}{\begin{enumerate}}
\newcommand{\ee}{\end{enumerate}}
\newcommand{\beq}{\begin{equation}}
\newcommand{\eeq}{\end{equation}}
\newcommand{\beqs}{\begin{eqnarray*}}
\newcommand{\eeqs}{\end{eqnarray*}}
\newcommand{\Rmnum}[1]{\expandafter\@slowromancap\romannumeral #1@}
\definecolor{DarkGreen}{rgb}{0.2, 0.6, 0.3}
\newtheorem{theorem}{Theorem}[section]
\newtheorem{conjecture}{Conjecture}
\newtheorem{prop}{Proposition}
\newtheorem{lemma}{Lemma}[section]
\theoremstyle{definition}\newtheorem{definition}{Definition}
\newtheorem{coro}[theorem]{Corollary}
\newtheorem{claim}{Claim}
\newtheorem{remark}{Remark}[section]
\begin{document}
\title{\textbf{Forbidden rainbow configurations in a family of edge-disjoint uniform hypergraphs}\footnote{\scriptsize Supported by the National Natural Science Foundation of China No. 12201375. Email: lp-math@snnu.edu.cn}
}

\author{\small Ping Li\\
{\small School of Mathematics and Statistics,}\\
{\small  Shaanxi Normal University, Xi'an, Shaanxi 710119, China}
}
\date{}
\maketitle

\begin{abstract}
The Ruzsa-Szemer\'{e}di $(6,3)$-problem can be equivalently stated as determining the maximum number of edge-disjoint triangles on $n$ vertices such that no triangle is formed by edges from three distinct triangle-copies.
Gowers and Janzer extended this problem by establishing an analogous result for complete graphs.
A natural generalization of the two results, first introduced by Imolay,  Karl,  Nagy and V\'{a}li, asks for the maximum number of edge-disjoint copies of a graph $F$ on $n$ vertices such that no copy of $G$ is formed by edges originating from distinct $F$-copies. 
This maximum number, denoted by $ex_F(n,G)$,  is called the {\em $F$-multicolor Tur\'{a}n number} of $G$.

This paper focuses on the setting of uniform hypergraphs. 
We first prove that for $k$-uniform hypergraphs $\mathcal{G}$ and $\mathcal{F}$, $ex_{\mathcal{F}}(n,\mathcal{G})=o(n^k)$ if and only if there exists a homomorphism from $\mathcal{G}$ to  $\mathcal{F}$.
For degenerate case, we show that  
$ex_{\mathcal{F}}(n,\mathcal{G})=n^{k-o(1)}$ whenever $\mathcal{G}$ contains a $k$-uniform tight triangle.
These results extend the main theorems of [{\em Discrete Math.}, 345 (2022), 112976] and [{\em J. Graph Theory}, 107(3) (2024), 1--13], respectively.
We further establish corresponding supersaturation and blowup statements. 
In the non-degenerate setting, we derive matching lower and upper bounds for 
$ex_{\mathcal{F}}(n,\mathcal{G})$.
We give a necessary and sufficient condition for $ex_{\mathcal{F}}(n,\mathcal{G})$ to fail to attain the upper bound, under the assumption that the extremal graphs for $\mathcal{G}$ are stable. 
As an application, we refine a result due to Imolay,  Karl, Nagy and V\'{a}li. Furthermore, we completely characterize 
$\mathcal{F}$ for which $ex_{\mathcal{F}}(n,\mathcal{G})$ does not attain the upper bound when $\mathcal{G}$ is one of the three special intersecting graphs: Fano plane, extended triangle and $r$-book of $r$-edges with $r=3,4$. 
\\[0.2cm]
{\bf Keywords:} Ruzsa-Szemer\'{e}di $(6,3)$-problem, $F$-multicolor Tur\'{a}n number, uniform hypergraph, homomorphism, supersaturation, intersecting graphs\\
{\bf AMS subject classification 2020:} 05C35, 05C65.
\end{abstract}

\section{Introduction}

In 1973, Brown, Erd\H{o}s and S\'{o}s \cite{BROWN,BROWN-2} initiated the study of the function $f_r(n,v,e)$, defined as  the maximum number of edges in an $n$-vertex $r$-uniform hypergraph such that any $v$ vertices span fewer than $e$ edges. 
In particular, if $e=\binom{v}{r}$, then  $f_r(n,v,e)=ex_r(n,K_v^{(r)})$ is the Tur\'{a}n number of the $v$-vertex complete $r$-uniform hypergraph $K_v^{(r)}$ (more generally, for an $r$-uniform hypergraphs $\mathcal{G}$, $ex_r(n,\mathcal{G})$ denotes the Tur\'{a}n number of $\mathcal{G}$, and $EX_r(n,\mathcal{G})$ denotes the set of $n$-vertex extremal graphs).
They proved that 
$$f_r(n,e(r-k)+k,e)=\Theta(n^k)$$
when $2\leq k<r$ and $e\geq 3$.
This suggests a natural but difficult problem of computing the asymptotic value of 
$$f_r(n,e(r-k)+k+1,e).$$
Establishing the value of $f(n,6,3)$  is known as $(6,3)$-problem (the case when $k+1=r=e=3$), which was resolved by Ruzsa and szemer\'{e}di \cite{6-3} in 1976 by proving 
$$n^2e^{-O(\sqrt{\log n})}=n^{2-o(1)}< f(n,6,3)=o(n^2).$$
Alon and Shapira \cite{Alon-Shapira} generalized the $(6,3)$-problem by determining the asymptotic value of $f_r(n,e(r-k)+k+1,e)$ when $e=3$ and $2\leq k<r$.
\begin{theorem}[Alon and Shapira, \cite{Alon-Shapira}]\label{thm-Alon-Shapira}
For any fixed $2\leq k<r$, we have 
$$n^{k-o(1)}<f_r(3(r-k)+k+1,3)=o(n^k).$$
\end{theorem}
\begin{remark}\label{remark-1}
In fact, Alon and Shapira \cite{Alon-Shapira} obtained a more stronger result: Theorem \ref{thm-Alon-Shapira} holds for $r$-uniform hypergraphs that any two edges intersect in at most $k-1$ vertices.
\end{remark}
They conjectured that for the case $e\geq 4$, the function $f_r(n,e(r-k)+k+1,e)$ exhibits asymptotic behavior similar to the case $e=3$.
\begin{conjecture}[Alon and Shapira, \cite{Alon-Shapira}]\label{conj-Alon-Shapira}
For any fixed $2\leq k<r$ and $e\geq 4$, we have 
$$n^{k-o(1)}<f_r(n,e(r-k)+k+1,e)=o(n^k).$$
\end{conjecture}

Let us return to the discussion of the $(6,3)$-problem.
Note that if two edges $e_1$ and $e_2$ share exactly two vertices, then $e_1\cup e_2$ forms a connected component. This implies that, for sufficiently large $n$, the $(6,3)$-problem can be equivalently studied in the setting of linear $3$-uniform hypergraph.
Consequently, an equivalent formulation of the  $(6,3)$-problem is 
to determine the maximum number of edges in an $n$-vertex linear $3$-uniform hypergraph that contains no Berge-triangle.
It is also worth noting that if each $3$-edge is interpreted as a triangle, then the $(6,3)$-problem can be further equivalently reformulated as following two statements: 
\begin{enumerate}
    \item [({\bf E}1).] Find the maximum number of  triangles in an $n$-vertex graph such that any two triangles share at most one common vertex.
    \item [({\bf E}2).] Embed the maximum number of edge-disjoint triangles into an $n$-vertex set such that no now triangle is formed by edges originating from three distinct triangle-copies.
\end{enumerate}

Gowers and Janzer \cite{Gowers} introduced a generalization of the 
$(6,3)$-problem, building on its equivalent formulation (E1).
They posed the following question: for integers $1\leq k<s$, what is the maximum number of copies of  $K_s$  in an $n$-vertex graph such that every $K_k$ is contained in at most one copy of $K_s$.
Note that the $(6,3)$-problem corresponds to the case $k=2$ and $s=3$.
Gowers and Janzer \cite{Gowers} resolved this problem by employing a probabilistic method based on random graph models embedded in high-dimensional Euclidean spaces.
\begin{theorem}[Gowers and Janzer, \cite{Gowers}]\label{thm-Gowers}
For each $1\leq k<s$ and positive $n$, there is a graph $G$ on $n$ vertices with $n^ke^{-O(\sqrt{\log n})}=n^{k-o(1)}$ copies of $K_s$ such that every $K_k$ is contained in at most one $K_s$.
\end{theorem}
In Theorem \ref{thm-Gowers}, for $2\leq k<s$, the graph $G$ naturally induces an $k$-uniform hypergraph $\mathcal{G}$ with $V(G)=V(\mathcal{G})$ and $E(G)=\{S:G[S]\mbox{ is a copy of }K_k\}$.
Observe that for any $S\in V(G)$ of size $s$, the induced graph $G[S]$ is a copy $K_s$ if and only if $\mathcal{G}[S]$ forms a copy of $K_s^{(k)}$.
According to Theorem \ref{thm-Gowers}, any two copies of $K_s^{(k)}$ in $\mathcal{G}$ are edge-disjoint, and it further implies the following result.
\begin{coro}\label{colo-1}
For $2\leq k<s$, let $N_{k,s}(n)$ denote the maximum number of edge-disjoint copies of $K_s^{(k)}$ on $n$ vertices such that 
there is no $K_s^{(k)}$  whose edges come from different $K_s^{(k)}$-copies. Then $N_{k,s}(n)\geq n^ke^{-O(\sqrt{\log n})}=n^{k-o(1)}$.
\end{coro}

Motivated by the equivalent formulation ({\bf E}2) of the $(6,3)$-problem and  Corollary \ref{colo-1}, a natural question arises: for two $k$-uniform hypergraphs $\mathcal{F}$ and $\mathcal{G}$, what is the maximum number of edge-disjoint $\mathcal{F}$-copies on $n$ vertices such that no copy of $\mathcal{G}$ is formed by edges originating from distinct $\mathcal{F}$-copies.
We denote by $ex_{\mathcal{F}}(n,\mathcal{G})$ the above maximum value and refer to it as the {\em  $\mathcal{F}$-multicolor Tur\'{a}n number} of $\mathcal{G}$.
The term ``$\mathcal{F}$-multicolor" stems from an alternative interpretation: if each such $\mathcal{F}$-copy is regarded as a monochromatic graph with a unique color, then $ex_{\mathcal{F}}(n,\mathcal{G})$ represents the maximum number of monochromatic copies of $\mathcal{F}$ on $n$ vertices  under the constraint that no rainbow copy of $\mathcal{G}$ appears.
This variant of Tur\'{a}n-type problem was first introduced by Imolay,  Karl,  Nagy and V\'{a}li \cite{IKNV} in the setting of simple graphs.

Note that if $k=2$ and $\mathcal{F}=\mathcal{G}=K_3$, then the problem reduces to the $(6,3)$-problem, implying $ex_{K_3}(n,K_3)=n^{2-o(1)}$; if $3\leq k<s$, then Corollary \ref{colo-1} indicates $ex_{K_s^{(k)}}(n,K_s^{(k)})\geq n^{k-o(1)}$.
In general, for an integer $k\geq 3$ and a simple graph $G$, the value $ex_{K_k}(n,G)$ represents the linear Tur\'{a}n number of Berge-$G$, denoted by $ex_k^{lin}(n,\mbox{Berge-}G)$, which is defined as the maximum number of edges in an $n$-vertex Berge-$G$-free linear $k$-unifrom hypergraph.
For general simple graphs $F$ and $G$, one foundational result established by Imolay,  Karl,  Nagy and V\'{a}li \cite{IKNV} is  stated as follows. 
\begin{theorem}[Imolay,  Karl,  Nagy and V\'{a}li, \cite{IKNV}]\label{F-density}
For two nonempty simple graphs $G$ and $F$, $ex_F(n,G)=\Theta(n^2)$ if and only if there is no homomorphism from $G$ to $F$.
\end{theorem}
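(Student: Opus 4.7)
The plan is to prove the two directions of the biconditional separately; throughout I read the ``$H$'' in the statement as the graph ``$F$''.

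\textbf{Lower bound, $G\not\to F$.} Set $t=\lfloor n/|V(F)|\rfloor$ and consider the balanced blow-up $F(t)$. Any subgraph of $F(t)$ projects homomorphically onto $F$, so $G\not\to F$ forces $G\not\subseteq F(t)$. Applied to the auxiliary hypergraph whose vertices are edges of $F(t)$ and whose hyperedges are canonical $F$-copies (regular with bounded codegree), R\"odl's nibble theorem produces $(1-o(1))t^{2}=\Theta(n^{2})$ pairwise edge-disjoint copies of $F$ inside $F(t)$. Giving each copy a private colour yields a rainbow-$G$-free system, so $ex_{F}(n,G)\ge\Omega(n^{2})$; the matching upper bound $ex_{F}(n,G)\le\binom{n}{2}/e(F)$ is trivial.

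\textbf{Upper bound, $G\to F$.} Fix a homomorphism $\phi:V(G)\to V(F)$ and suppose for contradiction that for some $\epsilon>0$ and infinitely many $n$ there is a collection $\mathcal{C}$ of at least $\epsilon n^{2}$ edge-disjoint copies of $F$ on $n$ vertices containing no rainbow copy of $G$; let $\Gamma=\bigcup_{F_{i}\in\mathcal{C}}F_{i}$. I would produce $\Omega(n^{v(G)})$ labelled copies of $G$ in $\Gamma$ via Regularity plus Counting and then show that all but $o(n^{v(G)})$ of them are rainbow. The first step is the Graph Removal Lemma: any edge set destroying every $F$-subgraph of $\Gamma$ must touch each of the $\epsilon n^{2}$ disjoint copies in $\mathcal{C}$, hence has size $\ge\epsilon n^{2}$; therefore $\Gamma$ contains $\Omega(n^{v(F)})$ unlabelled copies of $F$.

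Next I apply Szemer\'edi's Regularity Lemma to $\Gamma$ with $\epsilon'\ll d\ll 1$, obtaining an equitable partition $V_{1},\dots,V_{m}$ and $(\epsilon',d)$-reduced graph $R$. Call a copy of $F$ in $\Gamma$ \emph{good} if its $v(F)$ vertices lie in $v(F)$ distinct parts, every arising pair being $\epsilon'$-regular of density $\ge d$; a routine double-count bounds the bad copies by $O((\epsilon'+d+1/m)n^{v(F)})$, and tuning the parameters small relative to the implicit constant from Removal leaves plenty of good copies, witnessing $F\subseteq R$. Composing with $\phi$, I assign each $w\in V(G)$ to the part $V_{\phi(w)}$; adjacent vertices of $G$ then sit in $\epsilon'$-regular pairs of density $\ge d$, so the Counting (Embedding) Lemma delivers $\Omega(n^{v(G)})$ labelled copies of $G$ in $\Gamma$ respecting the placement.

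A copy of $G$ is non-rainbow only if two of its edges lie in a common $F$-copy of $\mathcal{C}$. For a fixed pair $(e,e')\in\binom{E(G)}{2}$, the common $F$-copy contributes $\le|\mathcal{C}|=O(n^{2})$ choices, the two edges inside it $O(1)$ choices (pinning down $|V(e)\cup V(e')|\in\{3,4\}$ vertices of $G$), and the remaining vertices at most $n^{v(G)-3}$ placements; summed over the $O(1)$ edge pairs the non-rainbow total is $O(n^{v(G)-1})=o(n^{v(G)})$, so at least one of the $\Omega(n^{v(G)})$ copies of $G$ is rainbow --- contradiction. The main obstacle is the step $F\hookrightarrow R$: a direct application of Regularity on $\Gamma$ only yields $e(R)=\Omega(m^{2})$, which need not meet the Erd\H{o}s--Stone threshold $\pi(F)+o(1)$ when $\epsilon$ is small, so Regularity alone does not suffice; the Removal Lemma is what bypasses the density bottleneck by promoting ``$\epsilon n^{2}$ edge-disjoint $F$-copies'' into ``$\Omega(n^{v(F)})$ copies of $F$'', which is what finally forces $F\subseteq R$.
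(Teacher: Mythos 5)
Your proof is correct (up to trivial edge cases such as $v(F)=2$), but be aware that the paper does not re-prove this theorem: it is quoted from \cite{IKNV}, and the paper's own argument is the $k$-uniform generalization, Theorem \ref{main-1}. Comparing against that proof, you take a genuinely different route at both halves. For the lower bound you pack the blow-up $F(t)$ with $(1-o(1))t^2$ edge-disjoint $F$-copies via R\"odl's nibble; the paper instead takes the complete $(\chi(G)-1)$-partite graph, observes by symmetry that it carries a fractional $F$-packing of size $e(\mathcal{H})/e(F)$, and converts it to an integral packing with the Haxell--R\"odl/R\"odl--Schacht--Siggers--Tokushige theorem (Theorem \ref{thm-frac}). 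Both host graphs are $G$-free for the same homomorphism reason, and both tools give $(1-o(1))$ of the trivial cap $\binom{n}{2}/e(F)$, so the choice is a matter of taste (nibble needs a regularity/codegree check which the blow-up supplies automatically). For the upper bound the endgame is identical --- your final paragraph is essentially the paper's Claim \ref{clm-thm1-1}: if two edges share an $F$-copy then at most $O(n^{v(G)-1})$ copies of $G$ can be non-rainbow, so $\Omega(n^{v(G)})$ copies of $G$ force a rainbow one. What differs is how the $\Omega(n^{v(G)})$ copies are produced. You go Removal Lemma $\Rightarrow\Omega(n^{v(F)})$ $F$-copies $\Rightarrow$ Szemer\'edi Regularity $\Rightarrow F\subseteq R\Rightarrow$ embedding lemma applied through $\phi$. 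The paper goes Hypergraph Regularity $+$ Counting $\Rightarrow\Omega(n^{v(F)})$ $F$-copies, then builds the auxiliary $v(F)$-uniform hypergraph $\mathcal{R}$ whose edges are the vertex sets of those $F$-copies and applies Supersaturation (Theorem \ref{hypergraph-supersaturation}, using $\pi_{v(F)}(K^{v(F)}_{v(F)}(v(G)))=0$) to extract $\Omega(n^{v(F)\cdot v(G)})$ copies of $F(v(G))\supseteq G$. The supersaturation detour is slightly less elementary for graphs, but it is the step that lifts cleanly to $k\geq 3$, which is why the paper uses it; your removal-then-reduced-graph argument is the more classical graph proof but would require substituting the full hypergraph embedding machinery to generalize.
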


Authors in \cite{IKNV} also showed that if $\chi(G)>\chi(F)$, then \begin{align}\label{eq-intro-1}
ex_F(n,G)=ex(n,G)/e(F)+o(n^2).
\end{align}
This motivates the study of $ex_F(n,G)$ in the case where $\chi(G)\leq\chi(F)$ and no homomorphism exists from $G$ to $F$.
Among all such graph pairs, the simplest example is $F=C_5$ and $G=C_3$.
Kov\'{a}cs and Nagy \cite{C5-C3-1} showed that $ex_{C_5}(n,K_3)=(1+o(1))n^2/25$. Balogh, Liebenau,  Mattos and Morrison \cite{C5-C3-2} refined the result by showing that for any $\delta>0$, $ex_{C_5}(n,K_3)\leq (n^2+3n)/25+\delta n$ holds for sufficiently large $n$.
For the degenerate case, Kov\'{a}cs and Nagy \cite{C5-C3-1} established the following result, which is a new generalization of the $(6,3)$-problem  $ex_{K_3}(n,K_3)$.
\begin{theorem}[Kov\'{a}cs and Nagy, \cite{C5-C3-1}]\label{F-dege}
If $F,G$ are simple graphs such that $G$ contains a triangle and there is a homomorphism from $G$ to $F$, then $ex_F(n,G)=n^{2-o(1)}$.
In particular, $ex_{K_r}(n,K_s)=n^{2-o(1)}$ if $3\leq s<r$.
\end{theorem}

This paper is primarily concerned with the uniform hypergraphs.
To avoid ambiguity, we refer to each monochromatic $\mathcal{F}$ in the definition of $\mathcal{F}$-multicolor Tur\'{a}n number as an {\em $\mathcal{F}$-copy}, whereas the term {\em a copy of $\mathcal{F}$}  denotes a subgraph that is isomorphic to $\mathcal{F}$.
The first work in this paper is to extend Theorem \ref{F-density} to uniform hypergraphs, as stated below.

\begin{theorem}\label{main-1}
For two $k$-uniform hypergraphs $\mathcal{G}$ and $\mathcal{H}$, $ex_\mathcal{F}(n,\mathcal{G})=\Theta(n^k)$ if and only if there is no homomorphism from $\mathcal{G}$ to $\mathcal{F}$.
\end{theorem}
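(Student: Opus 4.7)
The plan is to prove the two implications separately. For the \emph{only if} direction (no homomorphism from $\mathcal{G}$ to $\mathcal{F}$), I will exhibit $\Omega(n^k)$ edge-disjoint $\mathcal{F}$-copies on $n$ vertices containing no copy of $\mathcal{G}$ at all. Set $\ell=\lfloor n/|V(\mathcal{F})|\rfloor$ and work inside the $\ell$-blowup $\mathcal{F}(\ell)$: any injective copy of $\mathcal{G}$ in $\mathcal{F}(\ell)$, post-composed with the natural projection $\pi:V(\mathcal{F}(\ell))\to V(\mathcal{F})$, would produce a homomorphism $\mathcal{G}\to\mathcal{F}$, so by assumption $\mathcal{F}(\ell)$ is $\mathcal{G}$-free. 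Among the $\ell^{|V(\mathcal{F})|}$ natural transversal $\mathcal{F}$-copies (one per function $f:V(\mathcal{F})\to[\ell]$), two chosen uniformly share an edge with probability at most $e(\mathcal{F})/\ell^k$; sampling $c\ell^k$ copies uniformly for sufficiently small $c$ and deleting one copy from each colliding pair produces $\Omega(\ell^k)=\Omega(n^k)$ edge-disjoint copies by the alteration method, after padding with isolated vertices to reach $n$.

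The \emph{if} direction is the substantive content. Suppose $\phi:V(\mathcal{G})\to V(\mathcal{F})$ is a homomorphism and, towards contradiction, that the union hypergraph $\mathcal{H}$ of $\Omega(n^k)$ edge-disjoint $\mathcal{F}$-copies admits no rainbow $\mathcal{G}$. Apply the Hypergraph Regularity Lemma to $\mathcal{H}$ to obtain a vertex partition $V=V_1\cup\cdots\cup V_t$ together with the usual compatible lower-uniformity regular decompositions, under which all but an $\epsilon$-fraction of the $k$-polyads carry an $\epsilon$-regular sub-hypergraph of density at least some $\delta>0$. Each $\mathcal{F}$-copy has a \emph{cluster type} $\psi:V(\mathcal{F})\to[t]$ recording the part of each of its vertices; by edge-disjointness at most $O(\epsilon n^k)$ $\mathcal{F}$-copies use an edge on a bad polyad, and pigeonhole over the $t^{|V(\mathcal{F})|}=O(1)$ possible types yields a single $\psi$ realised by $\Omega(n^k)$ good $\mathcal{F}$-copies.

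Write $\tau=\psi\circ\phi:V(\mathcal{G})\to[t]$. For each edge $e=\{v_1,\ldots,v_k\}\in E(\mathcal{G})$, form the partite $k$-graph $H_e$ on $V_{\tau(v_1)},\ldots,V_{\tau(v_k)}$ whose edges are the images of position $\phi(e)\in E(\mathcal{F})$ across the selected $\mathcal{F}$-copies; edge-disjointness gives $|E(H_e)|=\Omega(n^k)$, hence density $\Omega(1)$ on the polyad support. After a further joint regularisation of the partition with respect to the family $\{H_e:e\in E(\mathcal{G})\}$ (iterating regularity or invoking the multi-hypergraph version), each $H_e$ becomes $\epsilon$-regular of positive density, and the Hypergraph Counting (Embedding) Lemma applied to the resulting $\mathcal{G}$-indexed partite colour-coded system produces $\Omega(n^{|V(\mathcal{G})|})$ injections $\iota:V(\mathcal{G})\to V$ with $\iota(v)\in V_{\tau(v)}$ and $\iota(e)\in E(H_e)$ for every $e\in E(\mathcal{G})$; each such $\iota$ is a copy of $\mathcal{G}$ in $\mathcal{H}$.

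Most of these copies are rainbow. If $\iota$ is non-rainbow then two distinct edges $e_1,e_2\in E(\mathcal{G})$ satisfy $\iota(e_1),\iota(e_2)\in E(F)$ for a common $\mathcal{F}$-copy $F$; since $\mathcal{G}$ is $k$-uniform and $e_1\neq e_2$ we have $|e_1\cup e_2|\geq k+1$, so after fixing $(e_1,e_2,F)$ the restriction $\iota|_{e_1\cup e_2}$ has only $O(1)$ choices inside $V(F)$ and the remaining vertices extend in at most $O(n^{|V(\mathcal{G})|-k-1})$ ways. Summing over the $O(n^k)$ choices of $F$ (using edge-disjointness) and the $O(1)$ pairs $(e_1,e_2)$ yields $O(n^{|V(\mathcal{G})|-1})$ non-rainbow copies, negligible against $\Omega(n^{|V(\mathcal{G})|})$. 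Hence a rainbow $\mathcal{G}$ must exist, the desired contradiction, so $ex_\mathcal{F}^{(k)}(n,\mathcal{G})=o(n^k)$. I expect the main technical obstacle to be the regularity step itself: combining the Hypergraph Regularity Lemma with the partite Counting Lemma in uniformity $k\geq 3$ is considerably more delicate than the graph case of Theorem~\ref{F-density}, and extra care is required to treat cluster types $\tau$ that identify several vertices of $\mathcal{G}$ (handled by subdividing the relevant $V_i$) and to choose the parameters $t,\epsilon,\delta$ compatibly.
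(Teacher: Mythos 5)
Your construction for the ``no homomorphism $\Rightarrow \Omega(n^k)$'' direction (which you label ``only if'') is correct but genuinely different from the paper's. You work inside the blowup $\mathcal{F}(\ell)$ and use a probabilistic alteration to extract $\Omega(\ell^k)$ edge-disjoint transversal copies; the paper instead takes a balanced complete $(\chi(\mathcal{G})-1)$-partite $k$-uniform hypergraph, observes by symmetry that it admits a fractional $\mathcal{F}$-packing of weight $\Omega(n^k)$, and converts this to an integer packing via the R\"{o}dl--Schacht--Siggers--Tokushige theorem (Theorem \ref{thm-frac}). Your alteration argument is arguably more elementary---it avoids the fractional-to-integer packing theorem entirely---and it is sound: two uniformly random transversal $\mathcal{F}$-copies of $\mathcal{F}(\ell)$ share a given edge of $\mathcal{F}$ with probability $\ell^{-k}$, so a union bound over $e(\mathcal{F})$ positions plus one deletion per collision leaves $\Omega(\ell^k)$ edge-disjoint copies, and $\mathcal{F}(\ell)$ is $\mathcal{G}$-free because any injective image of $\mathcal{G}$ would project to a homomorphism $\mathcal{G}\to\mathcal{F}$.

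The converse direction (``homomorphism $\Rightarrow o(n^k)$'') has the right skeleton---regularize, count many copies of $\mathcal{G}$, argue that only $O(n^{v(\mathcal{G})-1})$ are non-rainbow---but the counting step, which is the crux, has a genuine gap. You propose to form, for each edge $e\in E(\mathcal{G})$, a $k$-graph $H_e$ consisting of the $\phi(e)$-images of the selected $\mathcal{F}$-copies, then apply a partite Counting Lemma to the $\mathcal{G}$-indexed system $\{H_e\}$ after ``a further joint regularisation''. This step does not go through as stated. The Counting Lemma (Theorem \ref{counting-lemma}) requires each top-layer $k$-graph to be $(\delta,d,r)$-regular with respect to a single shared $(k-1)$-level polyad complex; the $H_e$'s are highly structured subsets of a regular cell (exactly one edge per surviving $\mathcal{F}$-copy) and are not themselves regular, nor is it clear that one can refine the partition so that all $|E(\mathcal{G})|$ of them become simultaneously regular with density bounded below on a $\mathcal{G}$-compatible family of polyads. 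Joint regularisation of several $k$-graphs for $k\geq 3$ refines the underlying lower-uniformity partitions as well, and your proposal gives no argument that positive density and regularity of each $H_e$ survive on a common complex consistent with the adjacency structure of $\mathcal{G}$. A secondary issue: you pigeonhole over cluster types $\psi:V(\mathcal{F})\to[t]$ without enforcing injectivity, so the surviving $\mathcal{F}$-copies need not be crossing; the paper secures this at the outset with a separate averaging argument (Lemma \ref{lem-crossing}) that fixes a coarse equitable $\ell$-partition with $\Omega(n^k)$ crossing $\mathcal{F}$-copies before refining.

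The paper sidesteps the joint-regularisation problem by a different counting route that is worth internalising. It applies the Counting Lemma exactly once, to the cleaned-up hypergraph $\mathcal{H}_1$, to produce $\Omega(n^{f})$ crossing copies of $\mathcal{F}$ (as subgraphs, not necessarily $\mathcal{F}$-copies) inside a fixed $f$-tuple of parts, where $f=v(\mathcal{F})$. It encodes these vertex sets as the edges of an auxiliary $f$-uniform hypergraph $\mathcal{R}$ of density $\Omega(1)$, then applies the Supersaturation Lemma (Theorem \ref{hypergraph-supersaturation}) together with $\pi_f\bigl(K_f^{(f)}(v(\mathcal{G}))\bigr)=0$ to find $\Omega(n^{f\cdot v(\mathcal{G})})$ copies of the complete $f$-partite $f$-graph in $\mathcal{R}$; each pulls back to a copy of $\mathcal{F}(v(\mathcal{G}))$ inside $\mathcal{H}$, and since $\mathcal{G}\subseteq\mathcal{F}(v(\mathcal{G}))$ one gets $\Omega(n^{v(\mathcal{G})})$ copies of $\mathcal{G}$ after dividing out the overcount. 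No coloured or simultaneous regularisation is needed. Your closing non-rainbow estimate of $O(n^{v(\mathcal{G})-1})$ matches the paper's Claim \ref{clm-thm1-1} and is fine; it is the step producing $\Omega(n^{v(\mathcal{G})})$ copies of $\mathcal{G}$ that needs the Supersaturation detour rather than the direct partite embedding you sketched.
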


For a $k$-uniform hypergraph $\mathcal{G}$ and an integer $\ell$, the {\em $\ell$-blowup} of $\mathcal{G}$, denoted by $\mathcal{G}(\ell)$, is defined by replacing each vertex $x$ of $\mathcal{G}$ with $\ell$ copies $x^1,\ldots,x^\ell$, and each edge $\{x_1,\ldots,x_k\}$ of $\mathcal{G}$ with $\ell^k$ edges $\{\{x_1^{a_1},\ldots,x_k^{a_k}\}:a_i\in[\ell]\mbox{ for each }i\in[k]\}$. 
The following two results imply that the supersaturation phenomenon and the blowup phenomenon also exist for $\mathcal{F}$-multicolor number.

\begin{prop}\label{super-sat}
Let $\mathcal{G}$ and $\mathcal{H}$ be two $k$-uniform hypergraphs. For any $\epsilon\in(0,1)$ and positive integer $s$, there exist $\eta\in(0,1)$ and an integer $N$, such that if $n>N$ and $\mathcal{R}$ is a $k$-uniform hypergraph consisting of  $ex_\mathcal{F}(n,\mathcal{G})+\epsilon n^k$  $\mathcal{F}$-copies, then $\mathcal{R}$ contains at least $\eta n^{v(\mathcal{G}(s))}$ copies of rainbow $\mathcal{G}(s)$. 
\end{prop}
 
\begin{theorem}\label{main-2}
Let $\mathcal{G}$ and $\mathcal{H}$ be two $k$-uniform hypergraphs. Then for any integer $s$, $ex_\mathcal{F}(n,\mathcal{G}(s))=ex_\mathcal{F}(n,\mathcal{G})+o(n^k)$.
\end{theorem}

Let $[n]$ denote the set $\{1,2,\ldots,n\}$ and let $[a,b]_{\mathbb{N}}$ denote the set of integers in the interval $[a,b]$.
For $2\leq s\leq k+1$, denote by $H^{(k)}_s$ the unique $k$-uniform hypergraph on $k+1$ vertices with $s$ edges.
In particular, $H_3^{(k)}$ is known as the {\em $k$-uniform tight triangle}.
We extend Theorem \ref{F-dege} as follows. 

\begin{theorem}\label{thm-triangle}
Suppose that $\mathcal{G}$ and $\mathcal{F}$ are $k$-uniform hypergraphs, and $\mathcal{G}$ contains a copy of $H_3^{(k)}$. If there is a homomorphism from $\mathcal{G}$ to $\mathcal{F}$, then $ex_\mathcal{F}(n,\mathcal{G})=n^{k-o(1)}$.
In special, $ex_{K_t^{(k)}}(n,K_r^{(k)})=n^{k-o(1)}$ if $t\geq r>k$.
\end{theorem}

\begin{remark}
When considering $H_2^{(k)}$, the value $ex_{K_t^{(k)}}(n,H_2^{(k)})$ deeply relates to {\em $L$-intersecting} $k$-uniform hypergraphs.
Formally, for an integer $k>1$ and a set $L\subseteq\{0,1,\ldots,k-1\}$, a $k$-uniform hypergraph is said to be $L$-intersecting if any two edges $e_1$ and $e_2$ satisfy $|e_1\cap e_2|\in L$. The special case $L=\{1\}$ corresponds to the Fisher's inequality, and the special case $L=[k-1]$ corresponds to  Erd\H{o}s–Ko–Rado Theorem. Ray-Chaudhuri-Wilson Theorem states that the number of edges in an $n$-vertex $L$-intersecting $k$-uniform hypergraphs do not excess $\binom{n}{|L|}$.
Using the Ray-Chaudhuri-Wilson Theorem and Theorem \ref{thm-Gowers}, we have that for $k\geq 3$, $n^{k-1-o(1)}\leq ex_{K_t^{(k)}}(n,H_2^{(k)})\leq {n\choose k-1}$.
\end{remark}

For the non-degenerate case where there is no homomorphism from $\mathcal{G}$ to $\mathcal{F}$, we give the inequality  \begin{align}\label{lower-upper}
    \frac{n^2}{v(\mathcal{F})^2}+o(n^k)\leq ex_\mathcal{F}(n,\mathcal{G})\leq e(\mathcal{F})^{-1}ex_r(n,\mathcal{G})+o(n^k).
\end{align}
We also verify that $ex_\mathcal{F}(n,\mathcal{G})$ attains the upper bound in Ineq.~(\ref{lower-upper}) for any $k$-part $k$-uniform hypergraph $\mathcal{F}$.

\begin{theorem}\label{thm-r-part}
For $k$-uniform hypergraphs $\mathcal{G}$ and $\mathcal{F}$, if there is no homomorphism form $\mathcal{F}$ to $\mathcal{G}$, then Ineq.~(\ref{lower-upper}) holds.
Moreover, if $\chi(\mathcal{F})=k$, then $ex_\mathcal{F}(n,\mathcal{G})= e(\mathcal{F})^{-1}ex_k(n,\mathcal{G})+o(n^k)$.
\end{theorem}

Note that for the case $k=2$, $ex_\mathcal{F}(n,\mathcal{G})$ attains the upper bound when $\chi(\mathcal{F})<\chi(\mathcal{G})$ (see Eq.(\ref{eq-intro-1})).
We will show that this result does not suitable uniform hypergraphs.
In fact, Eq.(\ref{eq-intro-1}) holds due to the stability result for $G$. 
However, when moving to hypergraphs, much less is known about extremal graphs and their stability, making it difficult to establish a full characterization analogous to Eq.(\ref{eq-intro-1}). 
Nevertheless, under the assumption that the extremal hypergraphs and stability, we can still provide a necessary and sufficient condition for $ex_\mathcal{F}(n,\mathcal{G})$ that does not attain its upper bound in Ineq.~(\ref{lower-upper})  by establishing a relationship between $ex_\mathcal{F}(n,\mathcal{G})$ and the $\mathcal{F}$-packing number in extremal graphs for $\mathcal{G}$.
\begin{enumerate}
    \item [$\bullet$] For an $r$-uniform hypergraph $\mathcal{G}$, we say that the extremal graphs of $\mathcal{G}$ are {\em stable} if for any $\epsilon>0$, there exist an integer $N$ and $\delta>0$ such that for any $n\geq N$ and each $n$-vertex $\mathcal{G}$-free $r$-uniform hypergraph $\mathcal{H}_n$ with $e(\mathcal{H}_n)\geq ex_r(n,\mathcal{G})-\delta n^r$, $\mathcal{H}_n$ can be transformed into an extremal graph in $EX_r(n,\mathcal{G})$ by deleting or adding at most $\epsilon n^r$ edges.
    \item [$\bullet$] The {\em $\mathcal{F}$-packing number of $\mathcal{H}$}, denoted by $\nu_\mathcal{H}(\mathcal{F})$, is the maximum number of edge-disjoint copies of $\mathcal{F}$ in  $\mathcal{H}$.
\end{enumerate}

\begin{prop}\label{prop-chara-upper}
For $k$-uniform hypergraphs $\mathcal{G}$ and $\mathcal{F}$, the following two statements hold. 
\begin{enumerate}
    \item If for any $n$ there exists an 
    $\mathcal{H}_n\in EX_k(n,\mathcal{G})$ such that $\nu_{\mathcal{H}_n}(\mathcal{F})= e(\mathcal{F})^{-1}ex_k(n,\mathcal{G})+o(n^k)$, then $ex_\mathcal{F}(n,\mathcal{G})= e(\mathcal{F})^{-1}ex_k(n,\mathcal{G})+o(n^k)$.
    \item If the extremal graphs of $\mathcal{G}$ are stable, then $ex_\mathcal{F}(n,\mathcal{G})$ does not reach the upper bound of Ineq.~(\ref{lower-upper}) if and only if there exists a real number $\epsilon>0$ such that for infinitely many $n$, every graph $\mathcal{H}_n\in EX_k(n,\mathcal{G})$ satisfies $\nu_{\mathcal{H}_n}(\mathcal{F})\leq e(\mathcal{F})^{-1}ex_k(n,\mathcal{G})-\epsilon n^k$.
\end{enumerate}
\end{prop}

For uniform hypergraphs, we can use Proposition \ref{prop-chara-upper} to verify whether  $$ex_\mathcal{F}(n,\mathcal{G})=e(\mathcal{F})^{-1}ex_k(n,\mathcal{G})+o(n^k)$$ 
for some given pairs $\mathcal{F}$ and $\mathcal{G}$.
Here, we focus primarily on the case where   
$\mathcal{G}$ is one of the following three specific intersecting hypergraph graphs: the Fano plane $\mathbb{F}$, the extended $2k$-uniform triangle $\mathcal{C}_3^{(2k)}$ and the  $k$-book $\mathcal{B}_k^{(k)}$ of $k$ pages (see Figure \ref{3-figs}).
We call a $2k$-uniform hypergraph {\em odd} if its vertex-set can be partitioned into two sets such that every edge intersects both parts in an odd number of vertices.
A $k$-uniform hypergraph is called  {\em bipartite} if its vertex-set can be partitioned into two sets such that every edge intersects both parts in at least one vertex. An {\em independent set } in a hypergraph is a vertex subset in which no two vertices lie in the same edge.
For a vertex set $S\subseteq V(\mathcal{H})$, let $\mathcal{H}-S$ denote the graph obtained from  $\mathcal{H}$ by deleting vertices in $S$ and all edges incident to any vertex in $S$.

\begin{figure}[h]
    \centering
    \includegraphics[width=300pt]{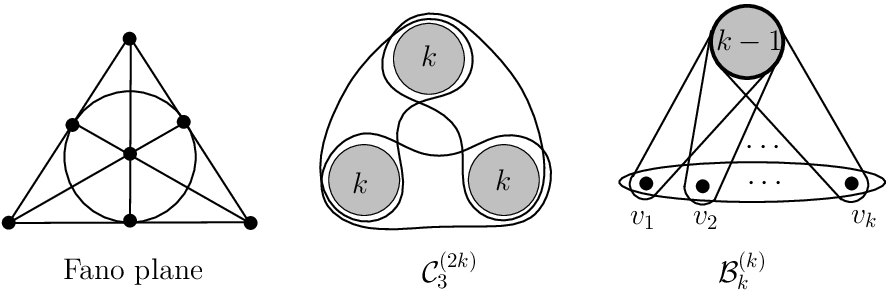}\\
    \caption{The left graph is known as the {\em Fano plane}  (the circle and segments represent edges), denoted by $\mathbb{F}$. The middle graph $\mathcal{C}_3^{(2k)}$ is called the {\em extended $2r$-uniform triangle}, which is a $2k$-uniform hypergraph with $V(\mathcal{C}_3^{(2k)})=[3k]$ and $E(\mathcal{C}_3^{(2k)})=\{[2k],[k+1,3k]_{\mathbb{N}},[3k]-[k+1,2k]_{\mathbb{N}}\}$. The right graph $\mathcal{B}_k^{(k)}$ is called the {\em $k$-book of $k$ pages}, which is a $k$ uniform hypergraph with $V(\mathcal{B}_k^{(k)})=[k-1]\cup\{v_i:i\in[k]\}$ and $E(\mathcal{B}_k^{(k)})=\{[k-1]\cup\{v_i\}:i\in[k]\}\cup\{v_i:i\in[k]\}$.}\label{3-figs}
\end{figure}

\begin{theorem}\label{thm-fano}
For $r$-uniform hypergraphs $\mathcal{F}$ and $\mathcal{G}\in\{\mathbb{F},\mathcal{C}_3^{(2k)},\mathcal{B}_3^{(3)},\mathcal{B}_4^{(4)}\}$, if $\mathcal{F}$ and $\mathcal{G}$ satisfy one of the following statements, then $ex_\mathcal{F}(n,\mathcal{G})$ fails to achieve the upper bound in Ineq.~(\ref{lower-upper}); otherwise, it attains this upper bound.
\begin{enumerate}
    \item $\mathcal{G}=\mathbb{F}$ and $\mathcal{F}$ is not bipartite.
    \item $\mathcal{G}=\mathcal{B}_3^{(3)}$ and for every independent set $S$ of $\mathcal{F}$, $\mathcal{F}-S$ is not an empty graph.
    \item $\mathcal{G}=\{\mathcal{C}_3^{(2k)},\mathcal{B}_4^{(4)}\}$ and $\mathcal{F}$ is not odd.
\end{enumerate}
\end{theorem}
\begin{remark}
    We choose $\mathcal{F}$ as a complete uniform hypergraph $K_s^{(k)}$. It is clear that if $\mathcal{G}=\mathbb{F}$ and $s\in\{5,6\}$, or $\mathcal{G}=\mathcal{C}_3^{(2k)}$ and $s\in[k+1,2k-1]_{\mathbb{N}}$, or $\mathcal{G}=\mathcal{B}_4^{(4)}$ and $r\in\{5,6\}$, 
    or $\mathcal{G}=\mathcal{B}_3^{(3)}$ and $r=4$,
then $ex_{K_s^{(k)}}(n,\mathcal{G})$ fails to achieve the upper bound in Ineq.~(\ref{lower-upper}).
\end{remark}

The following corollary refines a result from \cite{IKNV} by providing a complete characterization of simple graph pairs $G$ and $F$ for which $ex_F(n,G)$ satisfies Eq. \eqref{eq-intro-1}.
\begin{coro}\label{coro-refine-simple}
    For two simple graphs $F$ and $G$ with $\chi(G)\geq 3$, 
    $$ex_F(n,G)=\frac{\chi(G)-2}{2e(F)(\chi(G)-1)}n^2+o(n^2)$$ if and only if $\chi(G)>\chi(F)$.
\end{coro}


The proof of Theorem \ref{main-1} and Proposition \ref{super-sat} use tools of Hypergraph Regularity Lemma and Counting Lemma, and some results on fractional packing number  in hypergraphs, which will be introduced in Sections \ref{section-reg} and \ref{sec-prem}. In Section \ref{sec-prem}, we also present some key lemmas for our main proofs.  In Sections  \ref{section-thm-1}, we give a proof of Theorem \ref{main-1}.
In Section \ref{section-thm-2}, we prove Proposition \ref{super-sat} and Theorem \ref{main-2}, and finally we will prove all other results in Section \ref{section-thm-3}.

\section{The Hypergraph Regularity Lemma and Counting Lemma}\label{section-reg}

Hypergraph Regularity Lemma \cite{RJ} and Counting Lemma \cite{NRS} are powerful tools for analyzing dense uniform hypergraphs.
We refer to papers \cite{RSST,RS} for the notations, terminology and definition modes involved in the two lemmas.
We first introduce some basic notation as follows.

For a uniform hypergraph $\mathcal{G}$, let $V(\mathcal{G})$ denote the vertex-set of $\mathcal{G}$ and let $|V(\mathcal{G})|=v(\mathcal{G})$. Let $E(\mathcal{G})$ denote the edge set of $\mathcal{G}$, and let $e(\mathcal{G})$ or $|\mathcal{G}|$ denote the size of $E(\mathcal{G})$.
Sometimes, $\mathcal{G}$ also represents the edge set of itself. 
For a set of $V$, let ${V\choose k}$ denote the set consisting of all $k$-vertex subsets of $V$. 
For pairwise disjoint nonempty vertex-sets $V_1,V_2,\ldots,V_{\ell}$, let $K_{\ell}^{(k)}(V_1,V_2,\ldots,V_{\ell})$ denote the {\em complete $\ell$-partite $k$-uniform hypergraph}, whose edge set is $\{e\subseteq \bigcup_{i\in[\ell]}V_i:|e|=k\mbox{ and }|e\cap V_i|\leq 1\mbox{ for each }i\in[\ell]\}$.
If $|V_1|=|V_2|=\ldots=|V_\ell|=p$, then we also write
$K_{\ell}^{(k)}(V_1,V_2,\ldots,V_{\ell})$ as $K_{\ell}^{(k)}(p)$, which denotes the $p$-blowup of the complete $k$-uniform hypergraph $K_{\ell}^{(k)}$.
We call a spanning sub-hypergraph $\mathcal{H}$ of $K_{\ell}^{(k)}(V_1,V_2,\ldots,V_{\ell})$ an {\em $(\ell,k)$-hypergraph}.
In particular, we define $(\ell,1)$-hypergraph as a vertex-set $V$ along with a vertex partition $V_1,V_2,\ldots,V_{\ell}$ of $V$.
For simplicity, for a subset $I\subseteq [\ell]$, we use $\mathcal{H}_I$ or $\mathcal{H}[\bigcup_{i\in I}V_i]$ to denote the sub-hypergraph of $\mathcal{H}$ induced by $\bigcup_{i\in I}V_i$.

For any $k$-uniform hypergraph $\mathcal{H}$ and an integer $j\geq k$, we use $\mathcal{K}_j(\mathcal{H})$ to denote the set of all copies of $K_j^{(k)}$ in $\mathcal{H}$ 
(sometimes, we also regard $\mathcal{K}_j(\mathcal{H})$ as a $j$-uniform hypergraph whose edge set is $\{J\subseteq V(\mathcal{H}):|J|=j\mbox{ and }\mathcal{H}[J]\mbox{ is a copy of }K_j^{(k)}\}$).
In particular, if $k=1$ and $j=2$, then $\mathcal{H}$ is a vertex-set $V$ along with a vertex partition $V_1,V_2,\ldots,V_{\ell}$ of $V$, and $\mathcal{K}_j(\mathcal{H})$ is a complete $\ell$-partite graph with parts $V_1,V_2,\ldots,V_{\ell}$.
For a $k$-uniform hypergraph $\mathcal{H}^{(k)}$ and a $(k-1)$-uniform hypergraph $\mathcal{H}^{(k-1)}$, we say $\mathcal{H}^{(k-1)}$ {\em underlies} $\mathcal{H}^{(k)}$ if $\mathcal{H}^{(k)}$ is a subgraph of $\mathcal{K}_k(\mathcal{H}^{(k-1)})$. 
For a $k$-uniform hypergraph $\mathcal{H}$ and vertex partition $V_1,V_2,\ldots, V_{\ell}$ of $V(\mathcal{H})$, a sub-hypergraph $\mathcal{F}$ of $\mathcal{H}$ is {\em crossing} with respect to the vertex partition if $|V(\mathcal{F})\cap V_i|\leq 1$  for each $i\in[\ell]$.

\subsection{Complex and regularity}

We first introduce the  density (or relative density in some papers) of a $k$-uniform hypergraph with respect to a set of $(\ell,k-1)$-hypergraphs.

\begin{definition}[Density]
Suppose that $\mathcal{G}$ is a $k$-uniform hypergraph and $\mathcal{S}=\{\mathcal{F}_1,\ldots,\mathcal{F}_r\}$ is a collection of not necessary disjoint $(k,k-1)$-hypergraphs with the same vertex-set as $\mathcal{G}$. We define the {\em density of $\mathcal{G}$ with respect to $\mathcal{S}$} by 
$$
d(\mathcal{G}|\mathcal{S})=\left\{
\begin{array}{ll}
\frac{\left|\mathcal{G}\cap \bigcup_{i\in[r]}\mathcal{K}_k(\mathcal{F}_i)\right|}
{\left|\bigcup_{i\in[r]}\mathcal{K}_k(\mathcal{F}_i)\right|}, &  \mbox{ if } \left|\bigcup_{i\in[r]}\mathcal{K}_k(\mathcal{F}_i)\right|>0,\\
0, & \mbox{ otherwise}.\\
\end{array}
\right.
$$
Specifically, if $\mathcal{S}=\{\mathcal{F}_1\}$, then we use $d(\mathcal{G}|\mathcal{F}_1)$ to denote $d(\mathcal{G}|\mathcal{S})$, and call it the {\em density of $\mathcal{G}$ with respect to $\mathcal{F}_1$}. 
\end{definition}

Throughout the paper, our discussion are centered around a sequence of graphs, called complex, which is defined as follows.

\begin{definition}[Complex]
Let $\ell\geq j\geq 1$ be integers. An {\em $(\ell,j)$-complex} $\boldsymbol{\mathcal{H}}$ is a collection of $(\ell,i)$-hypergraphs $\{\mathcal{H}^{(i)}\}_{i=1}^j$ such that
\begin{enumerate}
  \item the $(\ell,1)$-hypergraph $\mathcal{H}^{(1)}$ is a vertex-set with partition $V_1,V_2,\ldots,V_\ell$;
  \item for each $i\in[j-1]$, $\mathcal{H}^{(i)}$ underlies $\mathcal{H}^{(i+1)}$.
\end{enumerate}
\end{definition}

We now define the regularity of an $i$-uniform hypergraph with respect to an $(\ell,i-1)$-hypergraph.

\begin{definition}[$(\epsilon,d,r)$-regular]
Let $\epsilon,d\in[0,1]$ be real numbers with $0<\epsilon <d\leq 1$ and let $r$ be a positive integer. Let $\mathcal{G}$ be an $i$-uniform hypergraph and $\mathcal{H}^{(i-1)}$ be an $(\ell,i-1)$-hypergraph with $V(\mathcal{G})=V(\mathcal{H}^{(i-1)})$. We say that {\em $\mathcal{G}$ is $(\epsilon,d,r)$-regular with respect to $\mathcal{H}^{(i-1)}$} if for any $i$-element subset $I$ of $[\ell]$ the following holds: for any sub-hypergraphs $\mathcal{F}_1,\ldots,\mathcal{F}_r$ of $\mathcal{H}^{(i-1)}_I$, if  
$|\bigcup_{j\in[r]}\mathcal{K}_i(\mathcal{F}_j)|\geq \epsilon|\mathcal{K}_i(\mathcal{H}^{(i-1)}_I)|$, then we have that
$d-\epsilon\leq d(\mathcal{G}_I|\{\mathcal{F}_1,\ldots,\mathcal{F}_r\})\leq d+\epsilon$.
In particular, we make a distinction between the following two special cases.
\begin{enumerate}
  \item if $r=1$, we simply say $\mathcal{G}$ is {\em $(\epsilon,d)$-regular with respect to $\mathcal{H}^{(i-1)}$};
  \item we say $\mathcal{G}$ is {\em $(\epsilon,r)$-regular with respect to $\mathcal{H}^{(i-1)}$} if $\mathcal{G}$ is $(\epsilon,d,r)$-regular with respect to $\mathcal{H}^{(i-1)}$ for some $d\in[0,1]$.
\end{enumerate}
\end{definition}

We extend the notion of regularity to complexes as follows.

\begin{definition}[$(\boldsymbol{\epsilon},\mathbf{d},r)$-regular complex]
Let $k,r$ be positive integers, and let $\boldsymbol{\epsilon}=(\epsilon_2,\ldots,\epsilon_k)$ and $\mathbf{d}=\{d_2,\ldots,d_k\}$ be vectors with $0<\epsilon_i\leq d_i\leq 1$ for $2\leq i\leq k$.
$\boldsymbol{\mathcal{H}}=\{\mathcal{H}^{(i)}\}_{i=1}^k$ is an {\em $(\boldsymbol{\epsilon},\mathbf{d},r)$-regular complex} if
\begin{enumerate}
  \item $\mathcal{H}^{(2)}$ is $(\epsilon_2,d_2)$-regular with respect to $\mathcal{H}^{(1)}$;
  \item for $2\leq i\leq k-1$, $\mathcal{H}^{(i+1)}$ is $(\epsilon_{i+1},d_{i+1},r)$-regular with respect to $\mathcal{H}^{(i)}$.
\end{enumerate}
\end{definition}

\subsection{Partitions}

In this subsection, we introduce a family of partitions on a vertex-set $V$ and some related notions.
Unlike the Regularity Lemma \cite{simple-reg}, the partitions in the Hypergraph Regularity Lemma \cite{RJ} extend beyond vertex-sets to include the partitioning of edge sets as well.

For a vertex $V$ and its partition $V=V_1\cup V_2\cup\ldots\cup V_{\ell}$, we define a family of partitions $\mathscr{P}=\{\mathscr{P}^{(i)}\}_{i=1}^{k-1}$ such that $\mathscr{P}^{(1)}$ is the vertex-set $V$ with the partition $V=V_1\cup V_2\cup\ldots\cup V_{\ell}$ and  $\mathscr{P}^{(i)}$ is an edge partition of $K_{\ell}^{(i)}(V_1,\ldots,V_\ell)$ for each $2\leq i\leq k-1$. 
Moreover, each partition class $\mathcal{P}^{(i)}$ of $\mathscr{P}^{(i)}$ is an $(i,i)$-hypergraph. 
It is particularly important to emphasize that there exists an additional relationship between each pair of partitions $\mathscr{P}^{(i)}$ and $\mathscr{P}^{(i-1)}$, which we refer to as cohesive relationship, and we will primarily interpret the relationship in the following discussion.

We first introduce some notations. For each $J\in K_{\ell}^{(j)}(V_1,\ldots,V_\ell)$ where $2\leq j\leq k-1$, we denote by $\mathcal{P}^{(j)}(J)$ the partition class of $\mathscr{P}^{(j)}$ such that $J\in \mathcal{P}^{(j)}(J)$.
For $i<j\leq k-1$ and an edge $J\in K_{\ell}^{(j)}(V_1,\ldots,,V_\ell)$, let $\widehat{\mathcal{P}}^{(i)}(J)$ denote the union 
$\bigcup_{I\in{J\choose i}} \mathcal{P}^{(i)}(I)$ (note that each partition class of $\mathscr{P}^{(i)}$ is an $(i,i)$-hypergraph). 
If $i=j-1$, then we call $\widehat{\mathcal{P}}^{(j-1)}(J)$ a {\em $j$-polyad} and define  
$$\mathscr{\widehat{P}}^{(j)}=\{\widehat{\mathcal{P}}^{(j-1)}(J): J\in \mathcal{K}_{\ell}^j(V_1,\ldots,,V_\ell)\}.$$
Note that for each edge $J\in K_{\ell}^{(j)}(V_1,\ldots,V_\ell)$, there is exactly one $j$-polyad  $\widehat{\mathcal{P}}^{(j-1)}(J)\in\mathscr{\widehat{P}}^{(j)}$ such that $\widehat{\mathcal{P}}^{(j-1)}(J)$ underlies $J$. 
Hence, $\{\mathcal{K}_j(\widehat{\mathcal{P}}^{(j-1)}):\widehat{\mathcal{P}}^{(j-1)}\in \mathscr{\widehat{P}}^{(j)}\}$ is a partition of $K_{\ell}^{(j)}(V_1,\ldots,,V_\ell)$.
The notation cohesive is defined as follows.

\begin{definition}[cohesive]
For $1\leq i\leq k-1$, we call $\mathscr{P}^{(i)}$ and $\mathscr{P}^{(i-1)}$ are {\em cohesive} if
each partition class of $\mathscr{P}^{(i)}$ is a subset of  $\mathcal{K}_j(\widehat{\mathcal{P}}^{(j-1)})$ for some $\widehat{\mathcal{P}}^{(j-1)}\in \mathscr{\widehat{P}}^{(j)}$.
\end{definition}

To facilitate the introduction of a family of partitions $\mathscr{P}$ and the concept of regularity in the Hypergraph Regularity Lemma, we need the following definitions.

\begin{definition}[a family of partition $\mathscr{P}(k-1,\mathbf{a})$]
Let $V$ be a vertex-set of order $n$ and $\mathbf{a}=(a_1,a_2,\ldots,a_{k-1})$ be a vector of positive integers. If partitions $\mathscr{P}=\{\mathscr{P}^{(i)}\}_{i=1}^{k-1}$ satisfies 
\begin{enumerate}
  \item $\mathscr{P}^{(1)}$ has $a_1$ partition classes, that is, $\mathscr{P}^{(1)}$ partitions $V$ into $a_1$ subsets,
  \item for $i\in[k-2]$, $\mathscr{P}^{(i+1)}$ and $\mathscr{P}^{(i)}$ are cohesive, and
  \item for $2\leq j\leq k-1$, 
  $|\{\mathcal{P}^{(j)}\in \mathscr{P}^{(j)}: \mathcal{P}^{(j)}\in \mathcal{K}_j(\widehat{\mathcal{P}}^{(j-1)})\}|\leq a_j$ for each  $\widehat{\mathcal{P}}^{(j-1)}\in \widehat{\mathscr{P}}^{(j)}$,
\end{enumerate}
then we say $\mathscr{P}$ is {\em a family of partition $\mathscr{P}=\mathscr{P}(k-1,\mathbf{a})$ on $V$}.
For an integer $L$, if $\max\{a_1,\ldots,a_{k-1}\}\leq L$, then we say $\mathscr{P}=\mathscr{P}(k-1,\mathbf{a})$ is {\em $L$-bounded}.
\end{definition}

\begin{remark}\label{relark-com}
Assume that  $\mathscr{P}=\mathscr{P}(k-1,\mathbf{a})=\{\mathscr{P}^{(i)}\}_{i=1}^{k-1}$ is a family of partition on $V$ with $\mathscr{P}^{(1)}=\{V_1,\ldots,V_{a_1}\}$.
Then each $J\in K_{a_1}^{(k)}(V_1,\ldots,V_{a_1})$ uniquely determines a complex 
$$\boldsymbol{\widehat{\mathcal{P}}}^{(k-1)}(J)=\{\widehat{\mathcal{P}}^{(i)}(J)\}_{i=1}^{k-1}.$$
This complex is also uniquely determined by its top layer: the $k$-polyad $\widehat{\mathcal{P}}^{(k-1)}(J)$.
The complex $\boldsymbol{\widehat{\mathcal{P}}}^{(k-1)}(J)$ is called {\em $k$-polyad complex} and let $Com_{k-1}(\mathscr{P})=\{\boldsymbol{\widehat{\mathcal{P}}}^{(k-1)}(J): J\in K_{a_1}^{(k)}(V_1,\ldots,V_{a_1})\}$.
\end{remark}

\subsection{Hypergraph Regularity Lemma and Counting Lemma}

We first introduce Hypergraph Regularity Lemma (see \cite{RJ,RS}).
Before this, we require two additional definitions.

\begin{definition}[$(\mu,\boldsymbol{\epsilon},\mathbf{d},r)$-equitable]\label{definition-uedr}
Let $k,r$ be positive integers and $\mu\in(0,1]$, and let $\boldsymbol{\epsilon}=(\epsilon_2,\ldots,\epsilon_{k-1})$ and $\mathbf{d}=\{d_2,\ldots,d_{k-1}\}$ be vectors with $\epsilon_i,d_i\in[0,1]$ for $2\leq i\leq k-1$.
A family of partition $\mathscr{P}=\mathscr{P}(k-1,\mathbf{a})=\{\mathscr{P}^{(i)}\}_{i=1}^{k-1}$ on $V$ is called {\em $(\mu,\boldsymbol{\epsilon},\mathbf{d},r)$-equitable} if
\begin{enumerate}
  \item $\mathscr{P}^{(1)}=\{V_1,\ldots,V_{a_1}\}$ is a partition of $V$ with $|V_1|\leq |V_2|\leq \ldots\leq |V_{a_1}|\leq |V_1|+1$;
  \item all but at most $\mu{n\choose k}$ many $k$-tuples $K\in {V\choose k}$ belong to $(\boldsymbol{\epsilon},\mathbf{d},r)$-regular complex $\boldsymbol{\widehat{\mathcal{P}}}^{(k-1)}(K)\in Com_{k-1}(\mathscr{P})$.
\end{enumerate}
\end{definition}

\begin{definition}[$(\epsilon,r)$-regular of $\mathcal{H}$ with respect to $\mathscr{P}$]\label{definition-br}
Let $k,r$ be positive integers and $\epsilon\in(0,1]$, and let $\mathcal{H}$ be a $k$-uniform hypergraph on $V$ and $\mathscr{P}=\mathscr{P}(k-1,\mathbf{a})=\{\mathscr{P}^{(i)}\}_{i=1}^{k-1}$  
be a family of partition on $V$ with $\mathscr{P}^{(1)}=\{V_1,\ldots,V_{a_1}\}$.
We say $\mathcal{H}$ is {\em $(\epsilon,r)$-regular with respect to $\mathscr{P}$} if for all but at most $\epsilon {n\choose k}$ edges $K\in K_{a_1}^{(k)}(V_1,\ldots,V_{a_1})$ we have that 
$\mathcal{H}\cap \mathcal{K}_{k}(\widehat{\mathcal{P}}^{(k-1)}(K))$ is $(\epsilon,r)$-regular  with respect to the $k$-polyad $\widehat{\mathcal{P}}^{(k-1)}(K)$.
\end{definition}

\begin{theorem}[Hypergraph Regularity Lemma]\label{regularity-lemma}
For every integer $k\in \mathbb{N}$, all numbers $\epsilon_k,\mu>0$, and any non-negative functions $\epsilon_{k-1}(x_{k-1})$, $\epsilon_{k-2}(x_{k-2},x_{k-1})$, $\ldots$, $\epsilon_{2}(x_2,\ldots,x_{k-1})$ and  $r=r(x_1,x_2,\ldots,x_{k-1})$, there exist integers $N_k$ and $L_k$ such that the following holds. For every $k$-uniform hypergraph $\mathcal{H}$ with $|V(\mathcal{H})|\geq N_k$ there exists a family of partitions $\mathscr{P}=\mathscr{P}(k-1, \mathbf{a})$ on $V(\mathcal{H})$ (say $\mathbf{a}=(a_1,\ldots,a_{k-1})$) and a vector $\mathbf{d}=(d_2,\ldots,d_{k-1})\in(0,1]^{k-2}$ so that
\begin{enumerate}
  \item $\mathscr{P}$ is $(\mu, \boldsymbol{\epsilon}(\mathbf{d}),\mathbf{d},r(a_1,\mathbf{d}))$-equitable and $L_k$-bounded, where $$\boldsymbol{\epsilon}(\mathbf{d})=(\epsilon_{2}(d_2,\ldots,d_{k-1}),\ldots,\epsilon_{k-2}(d_{k-2},d_{k-1}),
      \epsilon_{k-1}(d_{k-1}))$$
      and $r(a_1,\mathbf{d})=r(a_1,d_2,\ldots,d_{k-1})$;
  \item $\mathcal{H}$ is $(\epsilon_k,r(a_1,\mathbf{d}))$-regular with respect to $\mathscr{P}$.
\end{enumerate}
\end{theorem}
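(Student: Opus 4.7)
The plan is to prove the Hypergraph Regularity Lemma by induction on the uniformity $k$, following the general scheme of R\"{o}dl--Schacht. The base case $k=2$ reduces to Szemer\'{e}di's original regularity lemma, proved by the defect-index/energy argument: one starts with an arbitrary vertex partition of $V(\mathcal{H})$, and as long as the partition fails to be $\epsilon$-regular, an irregular pair exists whose subpartition by the witness vertex subsets increases the mean-square edge density by at least a fixed quantity. Since this index is bounded above by $1$, the iteration terminates after a bounded number of steps and yields a regular partition; the bound on the number of classes yields $L_2$, and $N_2$ is chosen just large enough for the resulting classes to be non-empty.

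For general $k\geq 3$, assuming inductively that a suitable regularity statement holds for all uniformities at most $k-1$, the core is a regularity-upgrade lemma of the following shape: given any $(\mu,\boldsymbol{\epsilon},\mathbf{d},r)$-equitable family $\mathscr{P}=\mathscr{P}(k-1,\mathbf{a})$ for which $\mathcal{H}$ fails to be $(\epsilon_k,r)$-regular with respect to $\mathscr{P}$, one constructs a refined cohesive family $\mathscr{P}'$ that is still equitable with respect to a slightly perturbed parameter tuple, and whose layered mean-square index exceeds that of $\mathscr{P}$ by a definite amount $\gamma=\gamma(\epsilon_k,\mathbf{d},r)>0$. Concretely, for each $k$-polyad $\widehat{\mathcal{P}}^{(k-1)}$ witnessing non-regularity of $\mathcal{H}$ one extracts sub-polyads on which the $\mathcal{H}$-density deviates from $d(\mathcal{H}|\widehat{\mathcal{P}}^{(k-1)})$ by more than $\epsilon_k$; these sub-polyads are then incorporated into a refinement of the $(k-1,k-1)$-partition by applying the inductive regularity statement to the $(k-1)$-uniform hypergraph whose edges are exactly these witnesses.

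The layered index $\mathrm{ind}(\mathscr{P})=\sum_{i=2}^{k} w_i\cdot \mathrm{ind}_i(\mathscr{P})$ is a weighted sum of mean-square densities over all levels $i=2,\ldots,k$, with weights chosen so that an improvement at any single level still produces a net positive increment. Since $\mathrm{ind}(\mathscr{P})\leq \sum_i w_i$ is bounded throughout, iterating the upgrade lemma terminates after at most a bounded number of steps, which determines $L_k$ and in turn $N_k$. The subtlety that $r$, $\boldsymbol{\epsilon}$ and the target precision may depend on $\mathbf{a}$ and $\mathbf{d}$ is handled by first discretising $[0,1]^{k-2}$ into a fine grid of admissible density vectors, fixing $\mathbf{d}$ on this grid at the cost of a small loss in the density bounds, and only then applying the chosen parameters $r(a_1,\mathbf{d})$ and $\boldsymbol{\epsilon}(\mathbf{d})$ in each upgrade step.

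The main obstacle is the coupling between levels imposed by the cohesive condition: any refinement of the $(i,i)$-partition forces compatible refinements of all $(j,j)$-partitions with $j>i$, and these induced refinements must neither destroy the regularities already achieved at lower levels nor inflate the class counts beyond the $L_k$-bound. Managing this cascade requires a careful two-phase cleanup in each outer iteration, first invoking the inductive hypothesis on the $(k-1)$-uniform structure to re-regularise $\mathscr{P}^{(k-1)}$ after refinement, and then repairing the $\mathcal{H}$-regularity at the top level $k$. The dependence of $r$ on $a_1$ in the statement arises precisely because the parameters for the inductive $(k-1)$-level call can only be chosen after $a_1$ is fixed by the initial partition.
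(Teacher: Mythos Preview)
The paper does not prove this theorem at all: it is stated as a known result, attributed to R\"{o}dl and Skokan \cite{RJ} (with the formulation following \cite{RS}), and then used as a black box in the proof of Theorem~\ref{main-1}. There is therefore no ``paper's own proof'' to compare your proposal against.

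Your sketch is a reasonable high-level outline of how such lemmas are proved in the literature, but a few remarks are in order. First, you attribute the scheme to R\"{o}dl--Schacht, whereas the version cited here is the R\"{o}dl--Skokan regularity lemma; the two frameworks differ in how regularity is defined (the present statement uses the $(\epsilon,d,r)$-regularity with the extra parameter $r$, characteristic of the R\"{o}dl--Skokan approach). Second, the base case you describe is not quite $k=2$ in the sense of Szemer\'{e}di's lemma alone: already for $k=3$ one needs a regularity lemma that regularises a $3$-uniform hypergraph relative to a regular partition of the underlying graph, and this is where the parameter $r$ and the functional dependence of $\epsilon_2$ on $d_2$ first enter. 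Third, your description of the index increment and the cascade of refinements is broadly correct in spirit, but the actual proof in \cite{RJ} is considerably more delicate than a single ``two-phase cleanup''; in particular, maintaining equitability across all levels after refinement and controlling the interaction between the $r$-parameter and the lower-level densities requires a more intricate iteration than your sketch suggests. If your goal is only to use the lemma, as the paper does, no proof is needed; if your goal is to prove it, the sketch would need substantial expansion before it could be checked.
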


\begin{remark}
The vertex partition $\mathscr{P}^{(1)}$ of $\mathscr{P}$ in Theorem \ref{regularity-lemma} can be initialized by refining any given equitable vertex-set partition $W_1,\ldots,W_\ell$ of $V(\mathcal{G})$ (in other words, each part of $\mathscr{P}^{(1)}$ is contained in some $W_i$).
\end{remark}

In order to introduce Counting Lemma \cite{RS}, we need one more definition as follows (here is a variant of the Counting Lemma which can be derived by the main result of \cite{NRS}, and in \cite{PHD} for a proof)
\begin{definition}[$(\boldsymbol{\delta},(\mathbf{d},\geq d),r)$-regular $(m,\mathcal{F})$-complex]\label{count-def}
Let $\mathcal{F}$ be a $k$-uniform hypergraph on vertex-set $[f]$, $d\in(0,1]$ and $r,m$ be positive integers, and let $\mathbf{d}=(d_2,\ldots,d_{k-1})$ and $\boldsymbol{\delta}=(\delta_2,\ldots,\delta_k)$ be victors with $d_i,\delta_j\in(0,1]$ for each $2\leq j\leq k$ and $2\leq i\leq k-1$. 
For a complex  $\boldsymbol{\mathcal{H}}=\{\mathcal{H}^{(i)}\}_{i=1}^k$, we call $\boldsymbol{\mathcal{H}}$ a {\em $(\boldsymbol{\delta},(\mathbf{d},\geq d),r)$-regular $(m,\mathcal{F})$-complex} if 
\begin{enumerate}
  \item $\mathcal{H}^{(1)}=\{V_1,\ldots,V_f\}$ is a vertex  partition with $|V_1|=\ldots=|V_f|=m$, and
  \item for every edge $K\in \mathcal{F}$, the $(k,k)$-complex 
  $\boldsymbol{\mathcal{H}}_K=\{\mathcal{H}^{(i)}_K\}_{i=1}^k$ is a $(\boldsymbol{\delta},(d_2,\ldots,d_{k-1},d_K),r)$-regular for some real number $d_K\in[d,1]$, where $\mathcal{H}^{(i)}_K=\mathcal{H}^{(i)}[\bigcup_{j\in K}V_j]$.
\end{enumerate}
\end{definition}

\begin{theorem}[Counting Lemma]\label{counting-lemma}
For every $k$-uniform hypergraph $\mathcal{F}$ with vertex-set $V(\mathcal{F})=[t]$ and $\nu>0$ the following statement holds. There exist non-negative functions
$$\delta_k(x_k),\delta_{k-1}(x_{k-1},x_k),\ldots,\delta_{2}(x_2,\ldots,x_k),r'(x_2,\ldots,x_k)$$ and $m_0(x_2,\ldots,x_k)$ so that for every choice of $\mathbf{d}=(d_2,\ldots,d_{k-1})\in (0,1]^{k-2}$ and $d_k\in (0,1]$ the following holds. 
If $\mathcal{H}=\{\mathcal{H}^{(1)},...,\mathcal{H}^{(k)}\}$ is a $(\boldsymbol{\delta},(\mathbf{d},\geq d_k),r')$-regular $(m,\mathcal{F})$-complex 
with  
$$\boldsymbol{\delta}=(\delta_{2}(d_2,\ldots,d_k),\ldots,\delta_{k-1}(d_{k-1},d_k),\delta_k(d_k)),$$ $r'=r'(\mathbf{d},d_k)$ and $m\geq m_0(\mathbf{d},d_k)$, 
then $\mathcal{H}^{(k)}$ contains at least
$$(1-\nu)m^t\cdot d_k^{|\mathcal{F}|}\cdot\prod_{i=2}^{k-1} d_i^{|\Delta_i(\mathcal{F})|}$$
copies of $\mathcal{F}$, where $\Delta_i(\mathcal{F})=\{I\in {V(\mathcal{F})\choose i}:\mbox{there is an edge }K\in \mathcal{F}\mbox{ such that }I\subseteq K\}$. 
Furthermore, we may assume that the function $m_0(x_2,...,x_k)$ is non-increasing in every variable.
\end{theorem}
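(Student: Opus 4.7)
The plan is to prove the Counting Lemma by induction on the uniformity $k$. The base case $k=2$ is the classical graph counting lemma: for an $f$-partite graph $\mathcal{H}^{(2)}$ that is $(\delta_2,d_2)$-regular on each relevant pair of parts, the number of partite-isomorphic copies of $\mathcal{F}$ is $(1\pm o(1))m^t d_2^{|\mathcal{F}|}$, which follows by embedding the vertices of $\mathcal{F}$ one at a time and repeatedly applying regularity to control the size of the common neighbourhood of the already-embedded vertices.

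For the inductive step $k-1\to k$, the idea is to separate the count into a \emph{lower} part---counting partite copies of the $(k-1)$-shadow $\Delta_{k-1}(\mathcal{F})$ in $\mathcal{H}^{(k-1)}$ supported by the lower-level complex---and an \emph{upper} part---counting which of these shadow copies extend to genuine copies of $\mathcal{F}$ in $\mathcal{H}^{(k)}$. Applying the inductive hypothesis to the $(k-1)$-uniform hypergraph $\Delta_{k-1}(\mathcal{F})$ and the $(k-1)$-complex $\{\mathcal{H}^{(i)}\}_{i=1}^{k-1}$ yields
\[
(1\pm\nu_1)\, m^t\prod_{i=2}^{k-1}d_i^{|\Delta_i(\mathcal{F})|}
\]
such shadow copies, since $\Delta_i(\Delta_{k-1}(\mathcal{F}))=\Delta_i(\mathcal{F})$ for $i\leq k-1$. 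It then remains to show that the fraction of shadow copies all of whose $|\mathcal{F}|$ top-level polyads simultaneously lie in $\mathcal{H}^{(k)}$ is $(1\pm\nu_2)\,d_k^{|\mathcal{F}|}$; this is done by fixing the $|\mathcal{F}|$ polyads one at a time and invoking the $(\delta_k(d_k),d_k,r')$-regularity of $\mathcal{H}^{(k)}$ with $r'\geq|\mathcal{F}|$ to peel off a factor $d_k\pm\delta_k$ at each step. The reason the definition demands $r'$-regularity (rather than $(\epsilon,d)$-regularity) is precisely that after partially embedding $\mathcal{F}$, the set of valid extensions for the remaining vertices is controlled not by a single polyad but by a union of them, and the $r'$-regularity guarantees the density concentration over such unions.

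The main obstacle is choosing the quantifier order of the parameters $\delta_k(x_k)$, $\delta_{k-1}(x_{k-1},x_k),\ldots,\delta_2(x_2,\ldots,x_k)$ and $r'(\mathbf{d},d_k)$ so that all inductive error terms and regularity defects combine into a total error bounded by $\nu$. Concretely, $\delta_k(d_k)$ must be much smaller than $d_k^{|\mathcal{F}|}/|\mathcal{F}|$ so that $|\mathcal{F}|$ successive density perturbations of size $\delta_k$ stay within the target error, and each $\delta_i$ (as a function of $d_{i+1},\ldots,d_k$) must be smaller than what the inductive hypothesis at level $k-1$ requires given the chosen parameters at levels $i+1,\ldots,k-1$. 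The threshold $m_0$ is then taken as the maximum over all size thresholds arising from the base case and the successive applications of the inductive hypothesis, and is manifestly non-increasing in each $d_i$ because making the densities smaller only forces a larger $m_0$ at each level.
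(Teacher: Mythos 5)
The paper does not prove Theorem~\ref{counting-lemma}; it is imported verbatim as a known result, with the author pointing to \cite{NRS} for the statement it can be derived from and to \cite{PHD} for a proof. So there is no in-paper argument to compare yours against, and the only meaningful question is whether your sketch would plausibly go through on its own.

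At the level of strategy your outline is the right one --- induction on $k$, splitting into a count of shadow copies of $\Delta_{k-1}(\mathcal{F})$ in the $(k-1)$-complex followed by an ``extension probability'' $d_k^{|\mathcal{F}|}$ for the top level --- and your remark that $r$-regularity over unions of polyads is exactly what the extension step needs is correct. But the extension step is precisely where the real theorem lives, and your sentence ``fixing the $|\mathcal{F}|$ polyads one at a time and \ldots peeling off a factor $d_k\pm\delta_k$ at each step'' conceals a genuine gap. Once you have conditioned on the first edge of $\mathcal{F}$ landing in $\mathcal{H}^{(k)}$, the family of shadow copies you are still summing over is selected by a $k$-level event, so it is no longer a union of cliques of sub-hypergraphs of the $(k-1)$-polyad; the definition of $(\delta_k,d_k,r')$-regularity only controls densities over sets of the form $\bigcup_{j\leq r'}\mathcal{K}_k(\mathcal{F}_j)$ with $\mathcal{F}_j\subseteq\widehat{\mathcal{P}}^{(k-1)}$, and that is not the set you now have in hand. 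Closing this gap is exactly what makes \cite{NRS} a long and delicate paper: one first proves a \emph{dense counting lemma} (all $d_i$ bounded well away from $0$), and then reduces the sparse statement to the dense one by a slicing/cleaning argument that repartitions the lower levels --- there is no clean edge-by-edge peeling. Your sketch also silently needs a matching upper bound on shadow copies from the inductive hypothesis (the theorem as stated gives only a lower bound), which is true but must be carried along. In short: reasonable roadmap, but the proposal as written is a restatement of what must be proved rather than a proof, and the hard technical content --- the dense counting lemma and the reduction to it --- is missing.
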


\section{Preliminaries}\label{sec-prem}

The application of the Counting Lemma requires the relationship between $\mathbf{d}$ and $L_k$ established in Theorem \ref{regularity-lemma}. 
In fact, the relationship has been explored in \cite{RS} (see Claim 6.1).
However, since our results depend on the choice of parameters, and for the convenience of reading, we restate the proof here.
\begin{lemma}\label{relation-d-L}
In Theorem \ref{regularity-lemma}, if $\epsilon_i\leq d_i$ for each $2\leq i\leq k-1$ and $\mu<8/9$, then $d_j>L_k^{-2^{k-1}}$ for each $2\leq i\leq k-1$.
\end{lemma}
\begin{proof}
Assume that the family of partitions $\mathscr{P}=\mathscr{P}(k-1, \mathbf{a})$ in Theorem \ref{regularity-lemma} is defined on an $n$-vertex-set $V$.
Suppose to the contrary that $L_k^{-2^{k-1}}\geq d_j$ for some $2\leq j\leq k-1$.
For simplicity, we use $\epsilon_i$ to denote $\epsilon_i(d_i,\ldots,d_{k-1})$ for each $2\leq i\leq k-1$.
We count the number of $j$-tuples, which are either not crossing with respect to $\mathscr{P}^{(1)}$ or does not belong to a $((\epsilon_2,\ldots,\epsilon_{j-1}),(d_2,\ldots,d_{j-1}),r(a_1,\mathbf{d}))$-regular $(j,j-1)$-complex  (called them {\em bad $j$-tuples}).

Note that the number of $(j,j-1)$-polyads is at most 
$${a_1\choose j}\cdot\prod_{i=2}^{j-1}a_i^{j\choose i}\leq L_k^{\sum_{i=1}^{j-1}{j\choose i}}/j!=L_k^{2^j-2}/j!\leq L_k^{2^{k-1}-2}/j!.$$
Since $\mathscr{P}$ is 
$(\mu, \boldsymbol{\epsilon}(\mathbf{d}),\mathbf{d},r(a_1,\mathbf{d}))$-equitable, 
there are at most $(d_j+\epsilon_j)m^j$ many $j$-tuples in each $(\epsilon_j,d_j,r(a_1,\mathbf{d}))$-regular $(j,j-1)$-polyads, where $m\leq  \lceil n/a_1\rceil\leq n/k$.
Since the number of $(\epsilon_j,d_j,r(a_1,\mathbf{d}))$-regular $(j,j-1)$-polyads is at most 
$L_k^{2^{k-1}-2}/j!$, and $L_k\geq a_1\geq k>j\geq 2$ and $d_j\leq L^{-2^{k-1}}$, the number of $j$-tuples in $(\epsilon_j,d_j,r(a_1,\mathbf{d}))$-regular $(j,j-1)$-polyads is at most 
$$(d_j+\epsilon_j)m^j\cdot L_k^{2^{k-1}-2}/j!\leq d_j m^j\cdot L_k^{2^{k-1}-2}
\leq L_k^{-2}{n\choose j}\leq \frac{1}{9}{n\choose j},$$
the first inequality holds because, according to the definition of the function $\epsilon_i$,  $\epsilon_i\leq d_i$ for $2\leq i\leq k-1$.
This implies that the number of bad $j$-tuples is at least 
$\frac{8}{9}{n\choose j}$.
However, since each $j$-tuple is contained in ${n-j\choose k-j}$ many $k$-tuples, and there are at most 
$\mu {n\choose k}$ many $k$-tuples does not belong to any 
$(\boldsymbol{\epsilon}(\mathbf{d}),\mathbf{d},r(a_1,\mathbf{d}))$-regular $(k,k-1)$-complexes (by the first statement), it follows that the number of bad $j$-tuples is at most 
$$\frac{\mu{n\choose k}{k\choose j}}{{n-j\choose k-j}}=\mu{n\choose j}<\frac{8}{9}{n\choose j},$$
a contradiction.
\end{proof}

Additionally, we will also need the Supersaturation Lemma \cite{Erdos-Sim}, which is stated as follows.

\begin{theorem} [Supersaturation Lemma]\label{hypergraph-supersaturation} 
Let $\mathcal{F}$ be a $k$-uniform hypergraph with $k\geq 2$. For any $\epsilon\in(0,1)$, there
exist positive constants $\delta=\delta(\mathcal{F},\epsilon)\in(0,1)$ and $n_0=n_0(\mathcal{F},\epsilon)$ such that for any $n$-vertex $k$-uniform hypergraph $\mathcal{G}$ with $n\geq n_0$, if $\mathcal{G}$ has at least $ex(n,\mathcal{F})+\epsilon n^k$ edges, then it contains at least $\delta n^{v(\mathcal{F})}$ copies of $\mathcal{F}$.
\end{theorem}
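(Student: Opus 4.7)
The plan is the classical Erd\H{o}s-Simonovits averaging argument over random induced sub-hypergraphs of size $m$. Write $f = v(\mathcal{F})$. I will choose $m = m(\mathcal{F},\epsilon)$ large enough that the per-edge threshold $ex(m,\mathcal{F})/\binom{m}{k}$ is squeezed close to the Tur\'{a}n density $\pi_k(\mathcal{F})$; then, sampling a uniformly random $m$-subset $S\subseteq V(\mathcal{G})$, the expected edge density of $\mathcal{G}[S]$ will strictly exceed this threshold by a positive constant margin (supplied by the surplus $\epsilon n^k$). Since $e(\mathcal{G}[S]) \leq \binom{m}{k}$ trivially, a positive fraction $q$ of $m$-subsets must then satisfy $e(\mathcal{G}[S]) > ex(m,\mathcal{F})$ and therefore induce at least one copy of $\mathcal{F}$; a reverse averaging over the $\binom{n}{m}$ subsets will convert this into $\Omega(n^f)$ copies in $\mathcal{G}$.

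For the quantitative step, a standard Katona-Nemetz-Simonovits double-count of $m$-subsets inside an extremal $\mathcal{F}$-free hypergraph shows that the sequence $ex(n,\mathcal{F})/\binom{n}{k}$ is non-increasing in $n$, and so converges monotonically to $\pi_k(\mathcal{F})$ while satisfying $ex(n,\mathcal{F}) \geq \pi_k(\mathcal{F})\binom{n}{k}$ for every $n$. I pick $m$ with $ex(m,\mathcal{F})/\binom{m}{k} \leq \pi_k(\mathcal{F}) + \epsilon k!/4$. Each edge of $\mathcal{G}$ is retained in $\mathcal{G}[S]$ with probability $\binom{m}{k}/\binom{n}{k}$, so using $n^k/\binom{n}{k} \geq k!$,
\[
\mathbb{E}\bigl[e(\mathcal{G}[S])\bigr] \;=\; e(\mathcal{G})\,\frac{\binom{m}{k}}{\binom{n}{k}} \;\geq\; \Bigl(\pi_k(\mathcal{F})\binom{n}{k}+\epsilon n^k\Bigr)\frac{\binom{m}{k}}{\binom{n}{k}} \;\geq\; \bigl(\pi_k(\mathcal{F})+\epsilon k!\bigr)\binom{m}{k}.
\]
Combined with the choice of $m$, this yields $\mathbb{E}[e(\mathcal{G}[S]) - ex(m,\mathcal{F})] \geq (3\epsilon k!/4)\binom{m}{k}$. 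Since $e(\mathcal{G}[S]) \leq \binom{m}{k}$ pointwise, the truncation bound $\mathbb{E}[X-t] \leq \Pr[X>t]\cdot(\binom{m}{k}-t)$ applied with $X = e(\mathcal{G}[S])$ and $t = ex(m,\mathcal{F})$ gives $q := \Pr[e(\mathcal{G}[S]) > ex(m,\mathcal{F})] \geq 3\epsilon k!/4$, and every such $S$ contains at least one copy of $\mathcal{F}$ by the definition of $ex(m,\mathcal{F})$.

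Finally, let $N$ be the total number of copies of $\mathcal{F}$ in $\mathcal{G}$ and double-count pairs $(C,S)$ with $V(C) \subseteq S$. Each copy $C$ lies in exactly $\binom{n-f}{m-f}$ of the $\binom{n}{m}$ subsets, while at least $q\binom{n}{m}$ subsets host a copy, so
\[
N \;\geq\; \frac{q\binom{n}{m}}{\binom{n-f}{m-f}} \;=\; \frac{q\binom{n}{f}}{\binom{m}{f}} \;\geq\; \delta n^f
\]
for a constant $\delta = \delta(\mathcal{F},\epsilon)>0$ depending only on $q$, $m$, and $f$. The only potentially subtle ingredient is the monotonicity of $ex(n,\mathcal{F})/\binom{n}{k}$ used to calibrate $m$; once that is in hand, the rest of the argument is elementary expectation and double-counting, and the main task is simply bookkeeping the asymptotic constants correctly.
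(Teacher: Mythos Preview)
The paper does not prove this statement; it cites it as the classical Supersaturation Lemma of Erd\H{o}s and Simonovits \cite{Erdos-Sim} and uses it as a black box. Your proposal reproduces exactly the original averaging argument from that reference (monotonicity of $ex(n,\mathcal{F})/\binom{n}{k}$, random $m$-subset sampling to force a positive fraction of $m$-sets above the extremal threshold, then reverse double-counting), and it is correct; the only cosmetic point is that your lower bound $q\geq 3\epsilon k!/4$ may formally exceed $1$, but of course one only needs $q$ bounded away from $0$ by a constant depending on $\epsilon$ and $k$, which your computation delivers.
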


\begin{lemma}\label{clm-thm1-1}
Let $\mathcal{F},\mathcal{G}$ be two $k$-uniform hypergraphs, and let $\mathcal{H}$ be a  $k$-uniform hypergraph consisting of $\mathcal{F}$-copies. Then $\mathcal{H}$ contains at most $O(n^{v(\mathcal{G})-1})$ non-rainbow copies of $\mathcal{G}$.
\end{lemma}
\begin{proof}
Since each non-rainbow copy of $\mathcal{G}$ in $\mathcal{H}$ contains two edges $e_1,e_2$ of the same color ($e_1,e_2$ come from the same  $\mathcal{F}$-copy), there are at most 
$$\frac{e(\mathcal{H})}{e(\mathcal{F})}\cdot {e(\mathcal{F})\choose 2}\leq \frac{n^k}{e(\mathcal{F})}\cdot {e(\mathcal{F})\choose 2}$$ 
choices of $e_1,e_2$ and there are at most $Cn^{v(\mathcal{G})-|e_1\cup e_2|}\geq Cn^{v(\mathcal{G})-k-1}$ copies of $\mathcal{G}$ containing $e_1$ and $e_2$ for some constant $C=C(\mathcal{G})>0$.
It follows that $\mathcal{H}$ contains at most 
$$\frac{n^k}{e(\mathcal{F})}\cdot {e(\mathcal{F})\choose 2}\cdot Cn^{v(\mathcal{G})-k-1}\leq  e(\mathcal{F})\cdot Cn^{v(\mathcal{G})-1}$$
copies of non-rainbow $\mathcal{G}$.
\end{proof}

Partial proofs in this paper also depend on packing and fractional packing in hypergraphs. 
For two $k$-uniform hypergraphs $\mathcal{G}$ and $\mathcal{H}$, let ${\mathcal{G}\choose \mathcal{H}}$ denote the set of all copies of $\mathcal{H}$ in $\mathcal{G}$.
A map $\varphi^*: {\mathcal{G}\choose \mathcal{H}}\rightarrow [0,1]$ such that for each $e\in E(\mathcal{G})$
$$\sum_{e\in E(\mathcal{H}'), \mathcal{H}'\in {\mathcal{G}\choose \mathcal{H}}}\varphi^*(\mathcal{H}')\leq 1$$
is called a {\em fractional $\mathcal{H}$-packing} of $\mathcal{G}$.
The maximum weight of $\sum_{\mathcal{H}\in {\mathcal{G}\choose \mathcal{H}}}\varphi^*(\mathcal{H})$ 
is denoted by $\nu^*_\mathcal{H}(\mathcal{G})$.
A map $\varphi: {\mathcal{G}\choose \mathcal{H}}\rightarrow \{0,1\}$ such that for each $e\in E(\mathcal{G})$
$$\sum_{e\in E(\mathcal{H}'), \mathcal{H}'\in {\mathcal{G}\choose \mathcal{H}}}\varphi(\mathcal{H}')\leq 1$$
is called an {\em $\mathcal{H}$-packing} of $\mathcal{G}$.
It is clear that the maximum weight of
$\sum_{\mathcal{H}\in {\mathcal{G}\choose \mathcal{H}}}\varphi(\mathcal{H})$ 
is the $\mathcal{H}$-packing number $\nu_\mathcal{H}(\mathcal{G})$ of  $\mathcal{G}$, which is defined in Section 1.
R\"{o}dl, Schacht, Siggers and Tokushige \cite{RSST} presented the following result, which extended a result by Haxell and R\"{o}dl \cite{fractional-graph}.

\begin{theorem}[\cite{RSST}]\label{thm-frac}
For every $k$-uniform hypergraph $\mathcal{F}$, and for all $\eta>0$, there
exists $N\in \mathbb{N}$, such that for all $n>N$ and all $k$-uniform hypergraphs $\mathcal{H}$ on $n$ vertices, $\nu^*_\mathcal{F}(\mathcal{H})-\nu_\mathcal{F}(\mathcal{H})<\eta n^k$.
\end{theorem}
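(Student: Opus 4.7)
The plan is to adapt the Haxell--R\"odl graph argument to hypergraphs, combining the Hypergraph Regularity Lemma (Theorem \ref{regularity-lemma}), the Counting Lemma (Theorem \ref{counting-lemma}), and a nibble-type near-perfect matching theorem for $e(\mathcal{F})$-uniform hypergraphs of bounded codegrees (Pippenger--Spencer or Frankl--R\"odl). The only direct use of fractional packing will be the trivial inequality $\nu^*_\mathcal{F}(\mathcal{H}) \leq e(\mathcal{H})/e(\mathcal{F})$, obtained by summing the packing constraint $\sum_{\mathcal{H}\ni e}\varphi^*(\mathcal{H})\le 1$ over all $e\in E(\mathcal{H})$. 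Hence it suffices to construct an integer $\mathcal{F}$-packing leaving at most $\eta\,e(\mathcal{F})\,n^k$ edges of $\mathcal{H}$ uncovered.

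Fix $\eta>0$. Choose $\mu,\boldsymbol{\epsilon},\mathbf{d},r$ and a density floor $d_0>0$ so that the accumulated error below is at most $\eta/2$. Apply Theorem \ref{regularity-lemma} to $\mathcal{H}$ to obtain a $(\mu,\boldsymbol{\epsilon},\mathbf{d},r)$-equitable family of partitions $\mathscr{P}=\mathscr{P}(k-1,\mathbf{a})$ with respect to which $\mathcal{H}$ is $(\delta_k,r)$-regular. Call an edge $K\in E(\mathcal{H})$ \emph{good} when its $k$-polyad complex $\boldsymbol{\widehat{\mathcal{P}}}^{(k-1)}(K)$ is $(\boldsymbol{\epsilon},\mathbf{d},r)$-regular, the slice $\mathcal{H}\cap\mathcal{K}_k(\widehat{\mathcal{P}}^{(k-1)}(K))$ is $(\delta_k,r)$-regular with respect to that polyad, and $d(\mathcal{H}|\widehat{\mathcal{P}}^{(k-1)}(K))\geq d_0$. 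Equitability, $(\delta_k,r)$-regularity of $\mathcal{H}$, and the density floor together exclude at most $(\mu+\delta_k+d_0)\binom{n}{k}$ edges of $\mathcal{H}$, which is less than $(\eta/4)e(\mathcal{F})n^k$ once $\mu,\delta_k,d_0$ are sufficiently small.

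The heart of the argument is the core step: inside each regular dense polyad complex $\boldsymbol{\mathcal{H}}_\pi$ with common part size $m\asymp n/a_1$, produce an integer $\mathcal{F}$-packing leaving $o(m^k)$ edges uncovered. Form the auxiliary $e(\mathcal{F})$-uniform hypergraph $\mathcal{A}_\pi$ whose vertices are the good edges of $\mathcal{H}$ in $\pi$ and whose hyperedges are the crossing copies of $\mathcal{F}$ in $\boldsymbol{\mathcal{H}}_\pi$. Applying the Counting Lemma to $\mathcal{F}$ with one edge pinned shows that each vertex of $\mathcal{A}_\pi$ has degree $(1\pm o(1))D$ for a common $D=\Theta(m^{v(\mathcal{F})-k})$; applying it to the hypergraph obtained by gluing two copies of $\mathcal{F}$ along a common edge shows that any two vertices of $\mathcal{A}_\pi$ have codegree $o(D)$. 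The Pippenger--Spencer theorem then produces a near-perfect matching of $\mathcal{A}_\pi$, i.e.\ an integer $\mathcal{F}$-packing of $\boldsymbol{\mathcal{H}}_\pi$ leaving $o(m^k)$ edges uncovered. Summing over all regular dense polyads gives the required global integer packing, so $\nu_\mathcal{F}(\mathcal{H})\geq e(\mathcal{H})/e(\mathcal{F})-\eta n^k/2\geq \nu^*_\mathcal{F}(\mathcal{H})-\eta n^k$.

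The principal obstacle I expect is the codegree verification in the core step. One must invoke the Counting Lemma not only for $\mathcal{F}$ and for $\mathcal{F}$ with one edge pinned, but for each of the $O(1)$ glued hypergraphs $\mathcal{F}\cup_K\mathcal{F}'$ (identifying an edge of one copy with an edge of another), and choose the functional inputs $\epsilon_i(\cdot),r(\cdot)$ to Theorem \ref{regularity-lemma} strongly enough that all of these Counting-Lemma applications are simultaneously valid on the resulting regular complex. This cascading ``downward'' selection of regularity parameters from index $k$ to $2$ is where the full functional quantification of Theorem \ref{regularity-lemma} is essential; once it is set up correctly, the rest of the argument is a standard nibble reduction.
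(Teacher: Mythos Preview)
First, note that the paper does not prove Theorem~\ref{thm-frac} at all; it simply quotes the result from \cite{RSST} and invokes it as a black box in the proof of Lemma~\ref{dense}. So there is no ``paper's own proof'' to compare against here.

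Regarding the proposal itself, there is a genuine gap in the overall strategy. Your reduction ``it suffices to construct an integer $\mathcal{F}$-packing leaving at most $\eta\,e(\mathcal{F})\,n^k$ edges of $\mathcal{H}$ uncovered'' is equivalent to proving the much stronger inequality $\nu_\mathcal{F}(\mathcal{H})\ge e(\mathcal{H})/e(\mathcal{F})-\eta n^k$, and this is false in general. Take $k=3$, $\mathcal{F}=K_4^{(3)}$, and let $\mathcal{H}$ be the complete balanced $3$-partite $3$-uniform hypergraph on $n$ vertices. Then $e(\mathcal{H})=(n/3)^3=\Theta(n^3)$ and every edge of $\mathcal{H}$ sits in a perfectly regular dense polyad, yet $\mathcal{H}$ is $K_4^{(3)}$-free, so $\nu_\mathcal{F}(\mathcal{H})=\nu^*_\mathcal{F}(\mathcal{H})=0$ and not a single edge can be covered. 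Your scheme would attempt to cover $\Theta(n^3)$ good edges with copies of $\mathcal{F}$ that do not exist. The bound $\nu^*_\mathcal{F}(\mathcal{H})\le e(\mathcal{H})/e(\mathcal{F})$ is simply too slack to drive the argument; the optimal fractional packing $\varphi^*$ carries real information about \emph{where} the copies of $\mathcal{F}$ live, and that information cannot be thrown away.

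The actual Haxell--R\"odl/\cite{RSST} argument does not try to cover all good edges. After regularising, one transfers the given optimal $\varphi^*$ to a fractional packing on the bounded ``address'' structure (assigning to each regular $v(\mathcal{F})$-tuple of parts, together with a choice of underlying polyads, the total $\varphi^*$-mass of the copies of $\mathcal{F}$ residing there), and then realises \emph{that} prescribed weight inside each regular cell via the nibble. The nibble step is close to what you wrote, but its target is dictated by $\varphi^*$, not by the raw edge count. This also explains why ``inside each regular dense polyad complex $\boldsymbol{\mathcal{H}}_\pi$'' is the wrong unit: a $k$-polyad spans only $k$ parts and cannot host a crossing copy of $\mathcal{F}$ once $v(\mathcal{F})>k$; the correct cells are $v(\mathcal{F})$-tuples of parts with their full polyad systems, selected according to the support of $\varphi^*$.
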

For two graphs $\mathcal{G},\mathcal{H}$ and an edge $e\in E(\mathcal{H})$, we use $\mathcal{N}_e(\mathcal{G},\mathcal{H})$ to denote the  number of copies of $\mathcal{G}$ (not necessarily edge-disjoint) in $\mathcal{H}$ that contain the edge $e$.
\begin{lemma}\label{lem-two-packings}
    Let $\mathcal{F}$ be a given $k$-uniform hypergraph. For any $k$-uniform hypergraph $\mathcal{H}$ with $v(\mathcal{H})$ sufficiently large, if $\mathcal{N}_e(\mathcal{F},\mathcal{H})=\mathcal{N}_{e'}(\mathcal{F},\mathcal{H})=m$ for any two edges $e,e'\in E(\mathcal{H})$, then 
    $\nu_\mathcal{F}(\mathcal{H})=\frac{e(\mathcal{H})}{e(\mathcal{F})}+o(n^k)$.
\end{lemma}
\begin{proof}
    Define a map $\varphi^*: {\mathcal{F}\choose \mathcal{H}}\rightarrow [0,1]$ such that 
    $\varphi^*(\mathcal{F}')=1/m$ for each $\mathcal{F}'\in {\mathcal{H}\choose \mathcal{F}}$.
    It is clear that $\varphi^*$ is a fractional $\mathcal{F}$-packing of $\mathcal{H}$ and $$e(\mathcal{H})=\sum_{e\in E(\mathcal{H})}\sum_{e\in \mathcal{F}',\mathcal{F}'\in{\mathcal{H}\choose \mathcal{F}}}\varphi^*(\mathcal{F}')\leq e(\mathcal{F})\cdot\nu^*_\mathcal{F}(\mathcal{H}).$$
    By Theorem \ref{thm-frac}, we have that $\nu_\mathcal{F}(\mathcal{H})=\nu^*_\mathcal{F}(\mathcal{H})+o(n^k)\geq \frac{e(\mathcal{H})}{e(\mathcal{F})}+o(n^k)$.
\end{proof}

\section{Proof of Theorem \ref{main-1}}\label{section-thm-1}

In this section, we prove our main result Theorem \ref{main-1}. The following result implies that the sufficiency of Theorem \ref{main-1} holds.

\begin{lemma}\label{dense}
If there is no homomorphism from $\mathcal{G}$ to $\mathcal{F}$, then $\frac{n^k}{v(F)^k}+o(n^{k-1})\leq ex_\mathcal{F}(n,\mathcal{G})=\Theta(n^k)$.
\end{lemma}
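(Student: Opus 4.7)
The plan is to prove the upper bound $ex_\mathcal{F}^{(k)}(n,\mathcal{G})=O(n^k)$ trivially (every edge-disjoint family has at most $\binom{n}{k}/e(\mathcal{F})$ copies), and the lower bound $ex_\mathcal{F}^{(k)}(n,\mathcal{G})=\Omega(n^k)$ by constructing many edge-disjoint copies of $\mathcal{F}$ inside a suitable blow-up of $\mathcal{F}$. Set $f=v(\mathcal{F})$, let $t=\lfloor n/f\rfloor$, and consider the $t$-blowup $\mathcal{F}(t)$, which sits on at most $n$ vertices.

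The first key observation is that $\mathcal{F}(t)$ is $\mathcal{G}$-free. Indeed, if $\phi:V(\mathcal{G})\to V(\mathcal{F}(t))$ were an embedding, then sending each vertex of $\mathcal{G}$ to the part of $V(\mathcal{F}(t))$ it lies in (i.e., to the corresponding vertex of $\mathcal{F}$) would yield a homomorphism from $\mathcal{G}$ to $\mathcal{F}$, contradicting the assumption. In particular, $\mathcal{F}(t)$ contains no copy of $\mathcal{G}$ at all, so there is certainly no rainbow copy of $\mathcal{G}$ regardless of how we color edge-disjoint $\mathcal{F}$-copies.

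The second step is to show that $\mathcal{F}(t)$ contains $\Omega(n^k)$ pairwise edge-disjoint copies of $\mathcal{F}$. I will first lower-bound the fractional packing number $\nu^*_\mathcal{F}(\mathcal{F}(t))$: there are $t^f$ natural ``transversal'' copies of $\mathcal{F}$ in $\mathcal{F}(t)$ (one per choice of a single vertex from each blown-up part), and each edge of $\mathcal{F}(t)$ lies in exactly $t^{f-k}$ of them. Assigning weight $t^{k-f}$ to each such transversal copy gives a valid fractional $\mathcal{F}$-packing with total weight $t^k$, so $\nu^*_\mathcal{F}(\mathcal{F}(t))\geq t^k$. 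Applying Theorem \ref{thm-frac} with a sufficiently small $\eta>0$ (say $\eta=1/(2f^k)$) to the $ft$-vertex hypergraph $\mathcal{F}(t)$ then yields
\[
\nu_\mathcal{F}(\mathcal{F}(t))\geq \nu^*_\mathcal{F}(\mathcal{F}(t))-\eta(ft)^k\geq t^k-\tfrac{1}{2}t^k=\tfrac{1}{2}t^k=\Omega(n^k),
\]
for all sufficiently large $n$. These pairwise edge-disjoint copies of $\mathcal{F}$, colored distinctly, form a legal configuration with no rainbow $\mathcal{G}$, establishing the lower bound.

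The only real content is the fractional-to-integer packing step via Theorem \ref{thm-frac}; the homomorphism argument and the fractional packing construction are both one-liners. I do not anticipate a serious obstacle, provided one is careful that the blow-up $\mathcal{F}(t)$ fits inside the $n$-vertex set (which it does since $ft\leq n$) and that the $\eta$ in Theorem \ref{thm-frac} is chosen before invoking it so that the linear-in-$n^k$ error term is absorbed by the main term $t^k\sim n^k/f^k$.
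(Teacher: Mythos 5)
Your proposal is correct, and it is a genuinely different (and in one respect more robust) argument than the paper's. Both proofs use the same key tool, Theorem~\ref{thm-frac}, to pass from a fractional $\mathcal{F}$-packing to an integer one, and both observe the trivial upper bound $ex_\mathcal{F}^{(k)}(n,\mathcal{G})\leq \binom{n}{k}/e(\mathcal{F})$. The difference is the host hypergraph: the paper takes a balanced complete $(\chi(\mathcal{G})-1)$-partite $k$-uniform hypergraph $\mathcal{H}$ (which is $\mathcal{G}$-free because $\chi(\mathcal{G})>\chi(\mathcal{G})-1$) and builds a fractional packing by putting weight $1/m$ on every $\mathcal{F}$-copy, where $m$ is the common number of $\mathcal{F}$-copies through each edge; you instead take the blow-up $\mathcal{F}(t)$ (which is $\mathcal{G}$-free precisely because there is no homomorphism $\mathcal{G}\to\mathcal{F}$) and build the fractional packing from the $t^f$ transversal copies. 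Your choice has the advantage that $\mathcal{F}(t)$ automatically contains many $\mathcal{F}$-copies, whereas the paper's host only contains $\mathcal{F}$-copies when $\chi(\mathcal{F})\leq\chi(\mathcal{G})-1$; this inequality is not guaranteed by the hypothesis that there is no homomorphism $\mathcal{G}\to\mathcal{F}$ (for graphs, $C_5$ and $K_3$ already give equality of chromatic numbers with no homomorphism $K_3\to C_5$), so your construction covers cases where the paper's argument as written would have $m=0$. The paper's construction, on the other hand, matches the conjectured extremal structure when $\chi(\mathcal{F})<\chi(\mathcal{G})$ and so would yield the better constant in that regime; for the order-of-magnitude statement $\Theta(n^k)$ that the lemma actually asserts, your argument is cleaner and suffices. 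One small point worth spelling out in a final write-up: Theorem~\ref{thm-frac} is applied to $\mathcal{F}(t)$ on $ft\leq n$ vertices, so you need $ft>N(\eta)$, which indeed holds for all sufficiently large $n$ since $t=\lfloor n/f\rfloor\to\infty$; you note this implicitly, but it should be said.
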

\begin{proof}
Assume that $n$ is sufficiently large.
Since there is no homomorphism from $\mathcal{G}$ to $\mathcal{F}$, it follows that there is no homomorphism from $\mathcal{G}$ to $\mathcal{F}(s)$ for any integer $s>0$.
Suppose that $\lfloor n/v(\mathcal{F})\rfloor=t$ and $\mathcal{H}=\mathcal{F}(t)$ is a $t$-blowup of  
$\mathcal{F}$ on $n_1=t\cdot v(\mathcal{F})$ vertices.
Then $\mathcal{H}$ is $\mathcal{G}$-free.
It is clear that each edge of $\mathcal{H}$ is contained in the same number of copies of $\mathcal{F}$ spanned by one vertex from each part of $V(\mathcal{H})$.
By Lemma \ref{lem-two-packings}, 
we have $\nu_\mathcal{F}(\mathcal{H})\geq  e(\mathcal{H})/e(\mathcal{F})+o(n^k)=\frac{n^k}{v(F)^k}+o(n^{k-1}).$
On the other hand, it is obvious that 
$$ex_\mathcal{F}(n,\mathcal{G})\leq {n\choose k}/e(\mathcal{F})\leq \frac{n^k}{k!e(\mathcal{F})}.$$
Hence,  $ex_\mathcal{F}(n,\mathcal{G})= \Theta(n^k)$.
\end{proof}

Next, we prove the necessity of Theorem \ref{main-1}, that is, to prove that if there is a homomorphism from $\mathcal{G}$ to $\mathcal{F}$, then $ex_\mathcal{F}(n,\mathcal{G})=o(n^k)$.
Suppose to the contrary that there is a $c^*\in(0,1)$ such that for any positive integer $N$, there exists $n>N$ with $ex_{\mathcal{F}}(n,\mathcal{G})\geq c^*n^k$.
For sufficiently large  integer $n$, suppose that $\mathcal{H}^*$ is a $k$-uniform hypergraph consisting of $c^*n^k$ edge-disjoint copies of $k$-uniform hypergraph $\mathcal{F}$ ($\mathcal{F}$-copies), and that it contains no rainbow  $\mathcal{G}$.

Since there is a homomorphism from $\mathcal{G}$ to $\mathcal{F}$, there exists an integer $\ell$ such that 
\begin{align}\label{frac-1-6}
\left(1-\frac{v(\mathcal{F})}{\ell}\right)^{v(\mathcal{F})}\geq \frac{2}{3}
\end{align} and $\mathcal{G}$ is a subgraph of $\mathcal{F}(\ell)$ (here, $\mathcal{F}(\ell)$ denotes an $\ell$-blowup of $\mathcal{F}$). In following lemma, we first select a sub-hypergraph $\mathcal{H}^*$ from $\mathcal{H}$ with most edge-disjoint $\mathcal{F}$-copies,  which are crossing with respect to some equitable vertex partition.

\begin{lemma}\label{lem-crossing}
There is a equitable vertex partition $W_1,W_2,\ldots, W_{\ell}$ of $\mathcal{H}^*$ such that there are at least 
$$\frac{c^*n^k}{3v(\mathcal{F})!\cdot \ell^{\ell-v(\mathcal{F})}}$$ 
crossing $\mathcal{F}$-copies with respect to the equitable vertex partition. 
\end{lemma}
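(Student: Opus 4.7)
The plan is to use a first-moment argument over random equitable partitions. I would sample $(W_1,\ldots,W_\ell)$ uniformly at random from all ordered equitable partitions of $V(\mathcal{H}^*)$ (each part of size $\lfloor n/\ell\rfloor$ or $\lceil n/\ell\rceil$), and let $X$ denote the number of $\mathcal{F}$-copies---among the $c^*n^k$ edge-disjoint copies that make up $\mathcal{H}^*$---that are crossing with respect to this partition. It then suffices to show $\mathbb{E}[X]\ge \frac{c^*n^k}{3\,v(\mathcal{F})!\,\ell^{\,\ell-v(\mathcal{F})}}$, since some realization of the partition will witness the lemma.

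The key calculation is the per-copy crossing probability. Fix a single $\mathcal{F}$-copy $F$ with vertex set $S$ of size $v(\mathcal{F})$; the copy is crossing iff the vertices of $S$ land in pairwise distinct parts of $(W_1,\ldots,W_\ell)$. A direct enumeration of equitable partitions sending $S$ injectively into the $\ell$ parts, divided by the total number of equitable partitions, yields
\[
\Pr[F\text{ is crossing}]\ =\ (1+o(1))\cdot\frac{\ell(\ell-1)\cdots(\ell-v(\mathcal{F})+1)}{\ell^{v(\mathcal{F})}}
\]
as $n\to\infty$, where the $(1+o(1))$ factor absorbs the floor/ceiling discrepancy between $|W_i|$ and $n/\ell$. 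Combining $\prod_{i=0}^{v(\mathcal{F})-1}(1-i/\ell)\ge(1-v(\mathcal{F})/\ell)^{v(\mathcal{F})}$ with the hypothesis (\ref{frac-1-6}) (whose complement reads $1-(1-v(F)/\ell)^{v(\mathcal{F})}\ge 5/6$) together with a short arithmetic check of $\binom{\ell}{v(\mathcal{F})}(v(\mathcal{F})!)^2\ell^{\,\ell-2v(\mathcal{F})}\ge 1/3$, this per-copy probability exceeds $\frac{1}{3\,v(\mathcal{F})!\,\ell^{\,\ell-v(\mathcal{F})}}$ for $n$ sufficiently large. Summing over the $c^*n^k$ edge-disjoint $\mathcal{F}$-copies by linearity of expectation delivers the desired lower bound on $\mathbb{E}[X]$, and choosing any equitable partition realizing at least the expected number of crossing copies completes the proof.

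The only mildly technical step is the enumeration underlying the per-copy probability, since one must work in the uniformly random equitable-partition model rather than the simpler independent-colouring model; writing $\Pr[F\text{ is crossing}]$ as a ratio of multinomial coefficients and extracting the leading term is standard but needs to be carried out carefully. Beyond that, the argument is pure first-moment bookkeeping, and none of the regularity or counting machinery from Section~2 is invoked at this stage.
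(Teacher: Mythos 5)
Your route is the same one the paper takes: average the number of crossing copies over the set of all equitable $\ell$-partitions, lower-bound the per-copy crossing ratio, and conclude by the first moment method that some partition meets or exceeds the mean. The leading-order formula $\Pr[F\text{ crossing}]=(1+o(1))\prod_{i=0}^{f-1}(1-i/\ell)$ with $f=v(\mathcal{F})$ is correct, and the paper performs the same computation explicitly via the ratio $\mu(n-\ell-s,\ell)/\mu(n-s,\ell)$ after first deleting a negligible number of $\mathcal{F}$-copies to force $\ell\mid v(\widetilde{\mathcal{H}})$; you absorb that cleanup into the $(1+o(1))$, which is fine.

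The problem is that the concluding step as you present it does not cohere. You lower-bound $\prod_{i<f}(1-i/\ell)$ by $(1-f/\ell)^f$ and then invoke (\ref{frac-1-6}); but (\ref{frac-1-6}) is an \emph{upper} bound, $(1-f/\ell)^f\le 1/6$, which cannot feed a lower bound on the product. (As an aside, (\ref{frac-1-6}) as printed cannot be the intended condition: the paper's own final step needs $(1-f/\ell)^f\ge 2/3$, which contradicts $\le 1/6$, so the inequality in (\ref{frac-1-6}) has almost certainly been flipped, and you have inherited that confusion.) Meanwhile your ``short arithmetic check'' $\binom{\ell}{f}(f!)^2\ell^{\ell-2f}\ge 1/3$ is not an auxiliary fact to be ``combined'' with the other two: it is literally a rearrangement of the goal $\tfrac{\ell!/(\ell-f)!}{\ell^{f}}\ge\tfrac{1}{3f!\,\ell^{\ell-f}}$, so listing it as one of three ingredients makes the argument circular. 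What you should do instead is prove this inequality outright --- it holds comfortably, since $\binom{\ell}{f}(f!)^2\ell^{\ell-2f}=\tfrac{\ell!}{(\ell-f)!}\,f!\,\ell^{\ell-2f}\ge 1$ whenever $\ell\ge 2f$ (all three factors are $\ge1$), and for $f\le\ell<2f$ one checks $(f!)^2/f^f\ge1$ and that the quantity is increasing in $\ell$ --- and it uses neither (\ref{frac-1-6}) nor the bound $\prod(1-i/\ell)\ge(1-f/\ell)^f$. With that repair, plus making the $(1+o(1))$ ratio-of-multinomials estimate precise as you already flag, the proof closes and is essentially the paper's averaging argument in probabilistic dress.
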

\begin{proof}
Delete $\mathcal{F}$-copies incident with $s\leq \ell-1$ vertices of $\mathcal{H}^*$ arbitrary  such that the resulting graph $\widetilde{\mathcal{H}}$ satisfies $\ell | v(\widetilde{\mathcal{H}})$.
It is clear that $\widetilde{\mathcal{H}}$ contains at least  $c^*n^k/2$ many $\mathcal{F}$-copies.

For positive integers $a,b$ with $a|b$, define 
$$\mu(b,a)=\frac{b!}{a ! \cdot \left[(b/a)! \right]^a}.$$
It is clear that there are $\mu(n-s,\ell)$ choices of the equitable vertex partition $U_1,U_2,\ldots, U_{\ell}$ of $V(\widetilde{\mathcal{H}})$.
For each $\mathcal{F}$-copy $\mathcal{F}^*$ in $\widetilde{\mathcal{H}}$, there are at least 
$${\ell\choose v(\mathcal{F})}\cdot \mu(n-\ell-s,\ell)$$
choices of the equitable vertex partition $U_1,U_2,\ldots, U_{\ell}$ such that $\mathcal{F}^*$ is crossing.
Hence, there exists a equitable vertex partition of $V(\widetilde{\mathcal{H}})$ such that there are at least 
\begin{align*}
\frac{{\ell\choose v(\mathcal{F})}\cdot \mu(n-\ell-s,\ell)\cdot c^*n^k/2}{\mu(n-s,\ell)}
&=\frac{\ell !}{v(\mathcal{F})!(\ell-v(\mathcal{F}))!}\cdot\frac{(n-\ell-s)!}{(n-s)!}\cdot
\left[\frac{((n-s)/\ell)!}{((n-\ell-s)/\ell)!}\right]^\ell \cdot \frac{c^*n^k}{2}\\
&\geq \frac{(\ell-v(\mathcal{F}))^{v(\mathcal{F})}}{v(\mathcal{F})!}\cdot \frac{1}{(n-s)^\ell} \cdot \frac{(n-s)^\ell}{\ell^\ell}\cdot \frac{c^*n^k}{2}\\
&=\frac{c^*[\ell-v(\mathcal{F})]^{v(\mathcal{F})}n^k}{2v(\mathcal{F})!\cdot \ell^{\ell}}
\geq \frac{c^*n^k}{3v(\mathcal{F})!\cdot \ell^{\ell-v(\mathcal{F})}} 
\end{align*}
$\mathcal{F}$-copies of $\widetilde{\mathcal{H}}$ are crossing with respect to the vertex partition (the last inequality is obtained by Ineq.~(\ref{frac-1-6})).  
Hence, there is a equitable vertex partition of $\mathcal{H}^*$, say $W_1,W_2,\ldots, W_{\ell}$, such that there are at least $\frac{c^*n^k}{3v(\mathcal{F})!\cdot \ell^{\ell-v(\mathcal{F})}}$ crossing $\mathcal{F}$-copies with respect to the equitable vertex partition. 
\end{proof}

By Lemma \ref{lem-crossing}, there is a sub-hypergraph $\mathcal{H}$ of $\mathcal{H}^*$ and a equitable vertex partition $W_1,W_2,\ldots, W_{\ell}$ of $\mathcal{H}$ such that each $\mathcal{F}$-copy is crossing with respect to the equitable vertex partition and $\mathcal{H}$ consists of at least $cn^k$ edges-disjoint $\mathcal{F}$-copies, where
\begin{align}\label{def-c}
c=\frac{c^*}{3v(\mathcal{F})!\cdot \ell^{\ell-v(\mathcal{F})}}.
\end{align}
Moreover, $\mathcal{H}$ is rainbow $\mathcal{G}$-free.
By Lemma \ref{clm-thm1-1}, in order to complete the proof, we only need to show that the following statement holds:
\begin{itemize}
  \item [$(\star)$.] there exists a positive real number $c'$ such that $\mathcal{H}$ contains at least $c'n^{v(\mathcal{G})}$ copies of $G$.
\end{itemize}


For $\mathcal{F}$ and $\nu=1/2$, Theorem \ref{counting-lemma} decides non-negative functions 
\begin{align}\label{fun-1}
\delta_k(x_k),\delta_{k-1}(x_{k-1},x_k),\ldots,\delta_{2}(x_2,\ldots,x_k),r'(x_2,\ldots,x_k)\mbox{ and }m_0(x_2,\ldots,x_k).
\end{align}
Below, we will complete the proof in three steps.

\bigskip

\noindent{\bf Hypergraph Regularity Lemma}.
In this part, we get a sub-hypergraph $\mathcal{H}_1$ of $\mathcal{H}$ satisfying  the following lemma. 

Let 
$\epsilon_k=\min\{\delta_k(c/3),c/12\}$, where $c$ is defined in Eq. (\ref{def-c}).
Let
$$\epsilon_i(x_i,\ldots,x_{k-1})=\min\{\delta_{i}(x_i,\ldots,x_{k-1},c/3),x_i\},$$ 
$2\leq i\leq k-1$, and $r(x_1,\ldots,x_{k-1})=r'(x_2,\ldots,x_{k-1},c/3)$
be functions.
Applying Theorem \ref{regularity-lemma} with the  vertex partition initiated by $\{W_1,W_2,\ldots,W_\ell\}$, we will get the following result.

\begin{lemma}\label{refined-reg}
For real numbers $\mu=c/12, \epsilon_k$ and non-negative functions $r,\epsilon_2,\ldots,\epsilon_{k-1}$ defined as above, there exist integers $N_{Reg}$ and $L_{Reg}$ such that the following holds.
If above $\mathcal{H}$ satisfies $v(\mathcal{H})\geq N_{Reg}$, there is a sub-hypergraph $\mathcal{H}_1$ of $\mathcal{H}$ with $V(\mathcal{H})=V(\mathcal{H}_1)$, a family of partitions $\mathscr{P}=\mathscr{P}(k-1, \mathbf{a})$ on $V(\mathcal{H}_1)$ (say $\mathscr{P}^{(1)}=\{V_1,\ldots,V_{a_1}\}$ and $\mathbf{a}=\{a_1,\ldots,a_{k-1}\}$) and a vector $\mathbf{d}=(d_2,\ldots,d_{k-1})$ such that
\begin{enumerate}
  \item [(I)] $\mathscr{P}$ is $(\mu, \boldsymbol{\epsilon}(\mathbf{d}),\mathbf{d},r(a_1,\mathbf{d}))$-equitable and $L_{Reg}$-bounded, where \begin{align*}
      \boldsymbol{\epsilon}(\mathbf{d})&=
      (\epsilon_{2}(d_2,\ldots,d_{k-1}),\ldots,\epsilon_{k-2}(d_{k-2},d_{k-1}),
      \epsilon_{k-1}(d_{k-1}))
      \end{align*}
      and $r(a_1,\mathbf{d})=r(a_1,d_2\ldots,d_{k-1})=r'(d_2,\ldots,d_{k-1},c/3)$;
  \item [(II)] For each $K\in\mathcal{H}_1$, $\mathcal{H}\cap \mathcal{K}_k(\widehat{\mathcal{P}}^{(k-1)}(K))$ is  $(\epsilon_k,d_K,r(a_1,\mathbf{d}))$-regular with respect to $\widehat{\mathcal{P}}^{(k-1)}(K)$ for some $d_K\geq c/3$;
  \item[(III)] every edge of $\mathcal{H}_1$ belongs to  some $(\boldsymbol{\epsilon}(\mathbf{d}),\mathbf{d},r(a_1,\mathbf{d}))$-regular $(k,k-1)$-complex;
  \item[(IV)] $\mathcal{H}_1$ contains at least $cn^k/2$ many $\mathcal{F}$-copies;
  \item [(V)] each $\mathcal{F}$-copy in $\mathcal{H}_1$ is crossing with respect to $\mathscr{P}^{(1)}$;
  \item [(VI)] $L_{Reg}^{-2^{k-1}}<d_j$ for each $2\leq j\leq k-1$.
\end{enumerate}
\end{lemma}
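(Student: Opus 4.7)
The plan is a standard application of Theorem \ref{regularity-lemma} followed by a clean-up of ``bad'' edges. I would apply the Hypergraph Regularity Lemma to $\mathcal{H}$ with $\mu=c/12$, $\epsilon_k$, and the non-negative functions $\epsilon_2,\ldots,\epsilon_{k-1}$, $r$ defined immediately before the lemma statement (all of which are legitimate inputs because they are determined by the Counting Lemma outputs in \eqref{fun-1}). The Remark after Theorem \ref{regularity-lemma} lets me initialize the vertex partition by refining $\{W_1,\ldots,W_\ell\}$, which yields a family of partitions $\mathscr{P}=\mathscr{P}(k-1,\mathbf a)$ with $\mathscr{P}^{(1)}$ refining $\{W_i\}$, a density vector $\mathbf d$, and integers $N_{Reg},L_{Reg}$ for which (I) is verbatim. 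Property (V) is immediate: each $\mathcal{F}$-copy in $\mathcal{H}$ already has its $v(\mathcal{F})\le\ell$ vertices in distinct $W_i$'s, so they lie in distinct parts of the refinement $\mathscr{P}^{(1)}$. Property (VI) follows from the standard construction in \cite{RJ,RS}, where each $d_j$ has the form $1/p_j$ with $p_j\le L_{Reg}$, so $d_j\ge 1/L_{Reg}>L_{Reg}^{-2^{k-1}}$ once $L_{Reg}$ is large.

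The substantive step is defining $\mathcal H_1$ by deleting from $\mathcal H$ every edge $K$ whose polyad $\widehat{\mathcal P}^{(k-1)}(K)$ is of any of three types: (a) the polyad complex $\boldsymbol{\widehat{\mathcal P}}^{(k-1)}(K)$ fails to be $(\boldsymbol\epsilon(\mathbf d),\mathbf d,r(a_1,\mathbf d))$-regular; (b) $\mathcal H\cap\mathcal K_k(\widehat{\mathcal P}^{(k-1)}(K))$ fails to be $(\epsilon_k,r(a_1,\mathbf d))$-regular with respect to $\widehat{\mathcal P}^{(k-1)}(K)$; (c) the polyad is regular but its associated density $d_K$ satisfies $d_K<c/3$. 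Once (a) and (b) are ruled out, the density $d_K$ is well defined, and after removing (c) we have $d_K\ge c/3$ for every surviving edge, which is exactly (II); meanwhile the polyad complex of any surviving edge is a $(\boldsymbol\epsilon(\mathbf d),\mathbf d,r(a_1,\mathbf d))$-regular $(k,k-1)$-complex containing $K$, yielding (III).

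For (IV), I would count the deletions. Equitability (Definition \ref{definition-uedr}) bounds type-(a) edges by $\mu\binom{n}{k}\le (c/12)n^k$; the $(\epsilon_k,r)$-regularity of $\mathcal H$ with respect to $\mathscr P$ (Definition \ref{definition-br}) bounds type-(b) edges by $\epsilon_k n^k\le(c/12)n^k$; and for type-(c), applying regularity with the single test hypergraph $\mathcal F_1=\widehat{\mathcal P}^{(k-1)}(K)$ gives $|\mathcal H\cap\mathcal K_k(\widehat{\mathcal P}^{(k-1)})|\le(d_K+\epsilon_k)|\mathcal K_k(\widehat{\mathcal P}^{(k-1)})|$, and summing over low-density polyads, using that $\{\mathcal K_k(\widehat{\mathcal P}^{(k-1)}):\widehat{\mathcal P}^{(k-1)}\in\widehat{\mathscr P}^{(k)}\}$ partitions $K_{a_1}^{(k)}(V_1,\ldots,V_{a_1})$, yields at most $(c/3+c/12)\binom{n}{k}$ edges. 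Since for $k\ge 3$ we have $\binom{n}{k}\le n^k/k!\le n^k/6$, the total number of deletions is well below $cn^k/2$. Because the $\mathcal F$-copies in $\mathcal H$ are pairwise edge-disjoint, each deleted edge destroys at most one $\mathcal F$-copy, so at least $cn^k/2$ intact $\mathcal F$-copies survive in $\mathcal H_1$, proving (IV).

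The main obstacle is not conceptual but rather parameter-tracking: the functions $\epsilon_i$, $r$, $\mu$, $\epsilon_k$ must be defined before the regularity lemma is applied and in terms of the Counting Lemma outputs (so that afterwards $\boldsymbol\epsilon(\mathbf d)$ and $r(a_1,\mathbf d)$ are precisely the parameters demanded by Theorem \ref{counting-lemma} for density $c/3$); once this is arranged correctly, the edge-counting itself is routine and the edge-disjointness of the $\mathcal F$-copies translates bounds on deleted edges into bounds on destroyed $\mathcal F$-copies at unit cost.
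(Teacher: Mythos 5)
Your proposal tracks the paper's proof almost step for step: you apply Theorem~\ref{regularity-lemma} with the same parameters and the same refinement of $\{W_1,\ldots,W_\ell\}$ to get (I) and (V), you form $\mathcal{H}_1$ by exactly the paper's three-way deletion (polyad complex irregular; $\mathcal{H}$ irregular on the polyad; polyad density below $c/3$) to get (II) and (III), and you invoke the edge-disjointness of the $\mathcal{F}$-copies to convert the bound on deleted edges into the bound on surviving $\mathcal{F}$-copies in (IV). Your edge count for the low-density polyads (using the partition $\{\mathcal{K}_k(\widehat{\mathcal{P}}^{(k-1)})\}$ of $K_{a_1}^{(k)}(V_1,\ldots,V_{a_1})$ and single-test regularity) is in fact slightly more careful than the paper's terse ``at most $cn^k/3$'' for the same step, and the arithmetic lands safely under $cn^k/2$.

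The one genuine divergence is (VI). You assert that the Rödl--Skokan construction produces densities of the form $d_j=1/p_j$ with $p_j\le L_{Reg}$, and conclude $d_j\ge 1/L_{Reg}>L_{Reg}^{-2^{k-1}}$. Theorem~\ref{regularity-lemma} as stated, however, only delivers some $\mathbf{d}\in(0,1]^{k-2}$ with no arithmetic structure, so your argument needs to open the black box of the regularity lemma's proof (or cite Claim~6.1 of \cite{RS} precisely, which the paper itself notes as an alternative). The paper instead derives (VI) from the stated properties alone, by contradiction: if $d_j\le L_{Reg}^{-2^{k-1}}$, then since the number of $(j,j-1)$-polyads is at most $L_{Reg}^{2^{k-1}-2}/j!$ and each regular polyad carries at most $(d_j+\epsilon_j)m^j\le 2d_jm^j$ many $j$-tuples (using $\epsilon_j\le d_j$ built into the definition of $\epsilon_j$), the total number of $j$-tuples sitting inside regular $(j,j-1)$-polyads is at most $\tfrac{1}{9}\binom{n}{j}$, leaving at least $\tfrac{8}{9}\binom{n}{j}$ bad $j$-tuples; but each bad $j$-tuple forces every $k$-tuple extending it to fail the equitability condition, and since $\mu=c/12<\tfrac{8}{9}$ that would exceed the $\mu\binom{n}{k}$ allowance. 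Your route is shorter and valid if one trusts the cited structural fact, but it is not self-contained relative to the theorem as quoted; the paper's counting argument is, and is the reason the author restates the proof rather than merely citing it.
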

\begin{proof}
We apply Theorem \ref{regularity-lemma} with real numbers $\mu, \epsilon_k$ and non-negative functions $r,\epsilon_2,\ldots,\epsilon_{k-1}$ by assuming that the initial partition $\mathscr{P}^{(1)}$ refines $\{W_1,W_2,\ldots,W_\ell\}$.
Then by Theorem \ref{regularity-lemma}, there are two integers $N_{Reg}$ and $L_{Reg}$. For the hypergraph $\mathcal{H}$ with $v(\mathcal{H})=n\geq N_{Reg}$, there is a family of partitions $\mathscr{P}=\mathscr{P}(k-1, \mathbf{a})$ on $V(\mathcal{H}_1)$ and a vector $\mathbf{d}=(d_2,\ldots,d_{k-1})$
such that $\mathcal{H}$ is $(\epsilon_k,r(a_1,\mathbf{d}))$-regular with respect to $\mathscr{P}$  and the statement $(I)$ holds.
Moreover,  since each $\mathcal{F}$-copy in $\mathcal{H}$ is crossing with respect to the partition $\{W_1,W_2,\ldots,W_\ell\}$ and $\mathscr{P}^{(1)}$ refines $\{W_1,W_2,\ldots,W_\ell\}$, $\mathcal{F}$-copies in any sub-hypergraph of $\mathcal{H}$  is crossing with respect to $\mathscr{P}^{(1)}$, the statement $(V)$ holds.

Let $\mathcal{H}_1$ be the sub-hypergraph of $\mathcal{H}$ obtained by deleting each edge $K$ satisfying one of the following statement (for all related notions, please refer to the Remark \ref{relark-com} and Definitions \ref{definition-uedr} and \ref{definition-br}).
\begin{itemize}
\item [(A).] $\boldsymbol{\widehat{\mathcal{P}}}^{(k-1)}(K)=\{\widehat{\mathcal{P}}^{(i)}(K)\}_{i=1}^{k-1}$ is not an $(\epsilon(\mathbf{d}),\mathbf{d},r(a_1,\mathbf{d}))$-regular $(k,k-1)$-complex.
  \item [(B).] $\mathcal{H}\cap \mathcal{K}_k(\widehat{\mathcal{P}}^{(k-1)}(K))$ is  $(\epsilon_k,d_K,r(a_1,\mathbf{d}))$-regular with respect to $\widehat{\mathcal{P}}^{(k-1)}(K)$ for some $d_K\in[0,c/3)$. 
  \item [(C).] $\mathcal{H}\cap \mathcal{K}_k(\widehat{\mathcal{P}}^{(k-1)}(K))$ is not $(\epsilon_k,r(a_1,\mathbf{d}))$-regular with respect to $\widehat{\mathcal{P}}^{(k-1)}(K)$. 
\end{itemize}
It is clear that there are at most $cn^k/3$ edges of $\mathcal{H}$ satisfying (B).
By statement $(I)$ in Lemma \ref{refined-reg}, there are at most $\mu {n\choose k}$ edges of $\mathcal{H}$ satisfy (A). In addition, there are at most $\epsilon_k {n\choose k}$ edges of $\mathcal{H}$ satisfy (C).
Therefore, we  remove at most $\mu {n\choose k}+cn^k/3+\epsilon_k {n\choose k}\leq cn^k/2$ edges from $\mathcal{H}$ in total. Consequently, the resulting sub-hypergraph $\mathcal{H}_1$ contains at least $cn^k/2$  $\mathcal{F}$-copies, which implies that the statement $(IV)$ hold.
Additionally, the edge-deleting methods ensure that the statements $(II)$ and $(III)$ hold.

By the definition of $\epsilon_i$, we have that $\epsilon_i\leq d_i$.
Note that $\mu=c/12<8/9$. By Lemma \ref{relation-d-L},  the statements $(II)$ and $(III)$ holds.
\end{proof}

\bigskip

\noindent{\bf Counting Lemma}.
In this part, we count the number of copies of $\mathcal{F}$ in $\mathcal{H}_1$.
For real numbers $\mu=c/12, \epsilon_k$ and non-negative functions $r,\epsilon_2,\ldots,\epsilon_{k-1}$ defined as above, by Lemma \ref{refined-reg}, there exist integers $N_{Reg}$ and $L_{Reg}$.
Let
$$N=\max\{L_{Reg}\cdot m_0(L_{Reg}^{-2^{k-1}},\ldots,L_{Reg}^{-2^{k-1}},c/3), N_{Reg}\}.$$
We choose the hypergraph $\mathcal{H}$ satisfying $v(\mathcal{H})=n\gg N$.
Above Lemma \ref{refined-reg} indicates that there is a sub-hypergraph $\mathcal{H}_1$ of $\mathcal{H}$ with $V(\mathcal{H})=V(\mathcal{H}_1)$, a family of partitions $\mathscr{P}=\mathscr{P}(k-1, \mathbf{a})$ on $V(\mathcal{H}_1)$ (say $\mathscr{P}^{(1)}=\{V_1,\ldots,V_{a_1}\}$ and $\mathbf{a}=\{a_1,\ldots,a_{k-1}\}$) and a vector $\mathbf{d}=(d_2,\ldots,d_{k-1})$ such that statements $(I)$--$(VI)$ hold.
By the statements $(IV)$ and $(V)$ of Lemma \ref{refined-reg}, $\mathcal{H}_1$ contains a copy $\mathcal{F}_0$ of $\mathcal{F}$ and $\mathcal{F}_0$ is crossing with respect to the vertex partition $\mathscr{P}^{(1)}$ in Lemma \ref{refined-reg}.
Without loss of generality, suppose that $V(\mathcal{F}_0)=\{v_1,\ldots,v_f\}$ and $v_i\in V_i$ for each $i\in[f]$. Since $\mathscr{P}^{(1)}=\{V_1,\ldots,V_{a_1}\}$ and $a_1\leq L_{Reg}$ by the statement $(I)$ in Lemma \ref{refined-reg}, it follows that
$$ n/L_{Reg}\leq \lfloor n/a_1\rfloor\leq|V_1|\leq\ldots\leq|V_f|\leq \lceil n/a_1\rceil.$$ 
Set $m=|V_1|$. Then $m\geq m_0(L_{Reg}^{-2^{k-1}},\ldots,L_{Reg}^{-2^{k-1}},c/3)\geq m_0(\mathbf{d},c/3)$ since $m_0$ is non-increasing in every variable and $d_i>L_{Reg}^{-2^k}$ for each $2\leq i\leq k-1$.

For $d_k=c/3$ and the vector $\mathbf{d}=(d_2,\ldots,d_{k-1})$.
Note that $\mathcal{H}_1[\bigcup_{i\in[f]}V_i]$ determines a complex $\boldsymbol{\mathcal{H}}_{\mathcal{F}_0}=\{\mathcal{H}^{(i)}\}_{i=1}^k$, where $\mathcal{H}^{(k)}=\mathcal{H}_1[\bigcup_{i\in[f]}V_i]$.
For each edge $K\in \mathcal{H}^{(k)}$, by statement $(III)$ of Lemma \ref{refined-reg}, we have that 
$\{\mathcal{H}^{(i)}[\bigcup_{j\in K}V_j]\}_{i=1}^{k-1}$ is an $(\boldsymbol{\epsilon}(\mathbf{d}),\mathbf{d},r(a_1,\mathbf{d}))$-regular complex, 
and also is a $(\boldsymbol{\delta}(\mathbf{d}),\mathbf{d},r(a_1,\mathbf{d}))$-regular complex since $\epsilon_i(d_2,\ldots,d_{k-1})\leq \delta_i(d_2,\ldots,d_{k-1},c/3)$ for each $2\leq i\leq k-1$.
By the statement $(II)$ of Lemma \ref{refined-reg}, we have 
that $\mathcal{H}\cap\mathcal{K}_k(\widehat{\mathcal{P}}^{(k-1)}(K))$ is 
$(\epsilon_k, d',r(a_1,\mathbf{d}))$-regular with respect to $\widehat{\mathcal{P}}^{(k-1)}(K)$ for some $d'\geq c/3=d_k$.
Hence, $\boldsymbol{\mathcal{H}}_{\mathcal{F}_0}$ is an $((\boldsymbol{\delta}(\mathbf{d}),\epsilon_k),(\mathbf{d},d_k),r(a_1,\mathbf{d}))$-regular $(m,\mathcal{F}_0)$-complex with $m\geq m_0(\mathbf{d},d_k)$ (note that $m\geq n/L_{Reg}$ also holds).
Hence, by Theorem \ref{counting-lemma}, there are  at least $\gamma n^f$ copies of $\mathcal{F}$ in $\mathcal{H}_1$, where 
$$\gamma=(1-\nu)L_{Reg}^{-f}\cdot d_k^{|\mathcal{F}|}\cdot\prod_{i=2}^{k-1}\cdot d_i^{|\Delta_i(\mathcal{F})|}\geq\frac{(c/3)^{|\mathcal{F}|}}{2} \cdot L_{Reg}^{-f-2^{k-1}\cdot\sum_{i=2}^{k-1}|\Delta_i(\mathcal{F})|}$$
is a positive real number.

\bigskip

\noindent{\bf Supersaturation lemma}.
In this part, we count the number of copies of $\mathcal{G}$ in $\mathcal{H}$.
We first introduce a notation $p_{\mathcal{F}}$ which represents the maximum number of edge-disjoint copies of $\mathcal{F}$ that can be embedded in $K_{v(\mathcal{F})}^{(k)}$.
For each copy $\mathcal{F}_1$ of $\mathcal{F}$ in $\mathcal{H}$, 
there is a permutation $\Phi_{\mathcal{F}_1}:[f]\rightarrow  [f]$ such that each $K\in V(\mathcal{F}_0)$ is uniquely associated with an edge $K'\in\mathcal{F}'$ that is crossing with respect to $\{v_i\in K:V_{\Phi_{\mathcal{F}_1}(i)}\}$.
It is clear that there is a permutation $\Phi:[f]\rightarrow  [f]$ such that there are at least $\frac{\gamma n^f}{p_{\mathcal{F}}\cdot f!}$ copies $\mathcal{F}_1$ of $\mathcal{F}$ with $ \Phi_{\mathcal{F}_1}=\Phi$ and any two such copies do not share the same vertex-set.
Let $\mathcal{H}'$ be the sub-hypergraph consisting of these copies of $\mathcal{F}$ and let $\mathcal{R}$ be an $f$-uniform hypergraph with $V(\mathcal{R})=V(\mathcal{H}')$ and $E(\mathcal{R})=\{V(\mathcal{F}_1):\mathcal{F}_1\mbox{ is a copy of }\mathcal{F}\mbox{ in }\mathcal{H}'\}$.
Since $e(\mathcal{R}) \geq \frac{\gamma n^f}{p_{\mathcal{F}}\cdot f!}$  and $\pi_f\left(K_f^{(f)}(v(\mathcal{G}))\right)=0$ (where $K_f^{(f)}(v(\mathcal{G}))$ represents a complete $f$-part $f$-uniform hypergraph with parts of equal size  $v(\mathcal{G})$), when $n$ is sufficiently large, there exists a $\widetilde{c}>0$ such that $\mathcal{R}$ contains at least $\widetilde{c}\cdot n^{f\cdot v(\mathcal{G})}$ copies of $K_f^{(f)}(v(\mathcal{G}))$ by Theorem \ref{hypergraph-supersaturation}.
Hence, $\mathcal{H}'$ contains at least $\widetilde{c}\cdot n^{f\cdot v(\mathcal{G})}$ copies of $\mathcal{F}(v(\mathcal{G}))$.
Since there is a homomorphism from $\mathcal{G}$ to $\mathcal{F}$, it follows that each $\mathcal{F}(v(\mathcal{G}))$ contains a copy of $\mathcal{G}$.
Since  each copy of $\mathcal{G}$ in $\mathcal{H}$ is contained in at most 
$Cn^{f\cdot v(\mathcal{G})-v(\mathcal{G})}$ copies of  $\mathcal{F}(v(\mathcal{G}))$ for some constant $C=C(\mathcal{F},\mathcal{G})>0$, it follows that $\mathcal{H}$ contains at least $c' n^{v(\mathcal{G})}$ copies of $\mathcal{G}$, where $c'=\frac{\widetilde{c}}{C}$.
Therefore, the statement ($\star$) holds. The proof is completed.
\hfill$\square$ 

\section{Proofs of Proposition \ref{super-sat} and Theorem \ref{main-2}}\label{section-thm-2}

\noindent{\bf Proof of Proposition \ref{super-sat}}:
Choose $\xi>0$ arbitrary.
Let $\mathcal{S}$ be the set of $k$-uniform hypergraphs $\mathcal{G}'$ with $v(\mathcal{G}')\leq v(\mathcal{G})$ such that there exists a homomorphism from  $\mathcal{G}$ to $\mathcal{G}'$. It is clear that $\mathcal{S}$ is a finite set. Without loss of generality, assume that $\mathcal{S}=\{\mathcal{G}_i:i\in[p]\}$.

By Theorem \ref{counting-lemma}, for a real number $\nu\in(0,1)$ and each $\mathcal{G}_j\in \mathcal{S}$,
there exists non-negative functions 
$$\delta_{k,j}(x_k),\delta_{k-1,j}(x_{k-1},x_k),\ldots,\delta_{2,j}(x_2,\ldots,x_k),r'_j(x_2,\ldots,x_k)$$ and 
$m_{0,j}(x_2,\ldots,x_k)$.
Let
$$\delta_i(x_i,\ldots,x_k)=\min\{\delta_{i,j}(x_i,\ldots,x_k):j\in[p]\}$$ 
for each $2\leq i\leq k$, $r'(x_2,\ldots,x_k)=\max\{r'_j(x_2,\ldots,x_k):j\in[p]\}$ and  
$$m_0(x_2,\ldots,x_k)=\max\{m_{0,j}(x_2,\ldots,x_k):j\in[p]\}.$$
For $\epsilon_k=\min\{\delta_k(\xi/10),\xi/10\}$, 
$\mu=\xi/10$, 
$$r(x_1,\ldots,x_{k-1})=r'(x_2,\ldots,x_{k-1},\xi/10)$$ 
and 
$$\epsilon_i(x_i,\ldots,x_{k-1})=\delta_i(x_i,\ldots,x_{k-1},\xi/10)$$ 
for $2\leq i\leq k-1$, Theorem \ref{regularity-lemma} determines integers $N_{Reg}$ and $L_{Reg}$.
Let
$$N_0=\max\left\{N_{Reg},L_{Reg}\cdot m_0(L_{Reg}^{-2^{k-1}},\cdots,L_{Reg}^{-2^{k-1}},\xi/10)\right\}.$$
Choose a $k$-uniform hypergraph $\mathcal{H}$ arbitrary such that $v(\mathcal{H})=n\gg N_0$ and $\mathcal{H}$ consists of $ex_\mathcal{F}(n,\mathcal{G})+\xi n^k$ $\mathcal{F}$-copies.
Apply Theorem \ref{regularity-lemma} again, 
there exists a family of partitions $\mathscr{P}=\mathscr{P}(k-1, \mathbf{a})$ on $V(\mathcal{H})$ (say $\mathbf{a}=(a_1,\ldots,a_{k-1})$) and a vector $\mathbf{d}=(d_2,\ldots,d_{k-1})\in(0,1]^{k-2}$ so that
\begin{enumerate}
  \item $\mathscr{P}$ is $(\mu, \boldsymbol{\epsilon}(\mathbf{d}),\mathbf{d},r(a_1,\mathbf{d}))$-equitable and $L_{Reg}$-bounded, where $$\boldsymbol{\epsilon}(\mathbf{d})=(\epsilon_{2}(d_2,\ldots,d_{k-1}),\ldots,\epsilon_{k-2}(d_{k-2},d_{k-1}),
      \epsilon_{k-1}(d_{k-1}))$$
      and $r(a_1,\mathbf{d})=r(a_1,d_2,\ldots,d_{k-1})$;
  \item $\mathcal{H}$ is $(\delta_k,r(a_1,\mathbf{d}))$-regular with respect to $\mathscr{P}$.
\end{enumerate}

Let $\mathcal{H}_1$ be the sub-hypergraph of $\mathcal{H}$ obtained by deleting each edge $K$ satisfying one of the following statement (for all related notions, please refer to the Remark \ref{relark-com} and Definitions \ref{definition-uedr} and \ref{definition-br}).
\begin{itemize}
\item [(A).] $\boldsymbol{\widehat{\mathcal{P}}}^{(k-1)}(K)=\{\widehat{\mathcal{P}}^{(i)}(K)\}_{i=1}^{k-1}$ is not an $(\epsilon(\mathbf{d}),\mathbf{d},r(a_1,\mathbf{d}))$-regular $(k,k-1)$-complex.
\item [(B).] $\mathcal{H}\cap \mathcal{K}_k(\widehat{\mathcal{P}}^{(k-1)}(K))$ is  $(\epsilon_k,d_K,r(a_1,\mathbf{d}))$-regular with respect to $\widehat{\mathcal{P}}^{(k-1)}(K)$ for some $d_K\in[0,\xi/10)$. 
\item [(C).] $\mathcal{H}\cap \mathcal{K}_k(\widehat{\mathcal{P}}^{(k-1)}(K))$ is not $(\epsilon_k,r(a_1,\mathbf{d}))$-regular with respect to $\widehat{\mathcal{P}}^{(k-1)}(K)$. 
\end{itemize}
As discussed in the proof of Lemma \ref{refined-reg}, we remove at most $\mu {n\choose k}+\xi n^k/10+\epsilon_k {n\choose k}< \xi n^k/3$ edges from $\mathcal{H}$ in total.
Since the edge removal destroys at most $\xi n^k/3$ $\mathcal{F}$-copies, there are at least $ex_\mathcal{F}(n,\mathcal{G})+2\xi n^k/3$ $\mathcal{F}$-copies, which implies $\mathcal{H}_1$ contains a copy of $\mathcal{G}$ (denote it by $\mathcal{G}_0$).

Let $U=\{i\in[a_i]:V_i\cap V(\mathcal{G}_0)\neq \emptyset\}$.
For each edge $e\in \mathcal{G}_0$, let $I_e=\{i\in[a_1]: V_i\cap e\neq \emptyset\}$. We construct a hypergraph  $\mathcal{G}'$ with $V(\mathcal{G}')=U$ and $E(\mathcal{G}')=\{I_e:e\in E(\mathcal{G})\}$.
Since each edge of $\mathcal{H}_1$ is crossing with respect to the partition $\mathscr{P}$, $\mathcal{G}'$ is a $k$-uniform hypergraph and $\mathcal{G}'\in\mathcal{S}$ (say $\mathcal{G}'$ is a copy of $\mathcal{G}_\ell$).
It is clear that $\mathcal{H}_1[\bigcup_{i\in U}V_i]$ determines a complex $\boldsymbol{\mathcal{H}}_{\mathcal{G}'}=\{\mathcal{H}^{(i)}\}_{i=1}^k$, where $\mathcal{H}^{(k)}=\mathcal{H}_1[\bigcup_{i\in U}V_i]$.
Let $d_k=\xi/10$.
\begin{claim}\label{clm-2-2}
$\boldsymbol{\mathcal{H}}_{\mathcal{G}'}$ is a $(\boldsymbol{\delta}_\ell, (\mathbf{d},\geq d_k), r_\ell)$-regular  $(m,\mathcal{G}_\ell)$-complex with  
$$\boldsymbol{\delta}_\ell=(\delta_{2,\ell}(d_2,\ldots,d_k),\ldots,\delta_{k-1,\ell}(d_{k-1},d_k),\delta_{k,\ell}(d_k)),$$ 
$r_\ell=r'_\ell(\mathbf{d},d_k)$ and $m\geq m_\ell(d_2,\cdots,d_k)$. 
\end{claim}
\begin{proof}
Since $d_k=\xi/10$ and each edge of $\mathcal{H}_1$ does not satisfy (A), (B) or (C), it follows that $\boldsymbol{\mathcal{H}}_{\mathcal{G}'}$ is a $(\boldsymbol{\delta}, (\mathbf{d},\geq d_k), r')$-regular  $(m,\mathcal{G}')$-complex with  
$$\boldsymbol{\delta}=(\delta_{2}(d_2,\ldots,d_k),\ldots,\delta_{k-1}(d_{k-1},d_k),\delta_k(d_k)),$$ 
$r'=r'(\mathbf{d},d_k)$ and $m\gg N_0/L_{Reg}\geq m_0(L_{Reg}^{-2^{k-1}},\cdots,L_{Reg}^{-2^{k-1}},\xi/10)$.
Since $\delta_{i,\ell}(d_i,\ldots,d_k)\geq \delta_{i}(d_i,\ldots,d_k)$ for each $2\leq i\leq k$ and $r_\ell\leq r'$, it follows that  $\boldsymbol{\mathcal{H}}_{\mathcal{G}'}$ is also a $(\boldsymbol{\delta}_\ell, (\mathbf{d},\geq d_k), r_\ell)$-regular  $(m,\mathcal{G}_\ell)$-complex with  
$$\boldsymbol{\delta}_\ell=(\delta_{2,\ell}(d_2,\ldots,d_k),\ldots,\delta_{k-1,\ell}(d_{k-1},d_k),\delta_{k,\ell}(d_k)),$$ 
$r_\ell=r'_\ell(\mathbf{d},d_k)$ and $m\gg m_0(L_{Reg}^{-2^{k-1}},\cdots,L_{Reg}^{-2^{k-1}},\xi/10)\geq m_\ell(d_2,\cdots,d_k)$, the last inequality holds since the function $m_0(x_2,\ldots,x_k)$ is non-increasing in every variable by Lemma \ref{relation-d-L}, and since $d_i> L_{Reg}^{-2^{k-1}}$ by Lemma \ref{dense}.
\end{proof}

By Theorem \ref{counting-lemma} and Claim \ref{clm-2-2}, $\mathcal{H}[\bigcup_{i\in U}V_i]$ has at least 
$$(1-\nu)m^{|\mathcal{G}_\ell|}\cdot d_k^{|\mathcal{G}_\ell|}\cdot\prod_{i=2}^{k-1} d_i^{|\Delta_i(\mathcal{G}_\ell)|}\geq \eta_1 n^{|\mathcal{G}_\ell|}$$
(not necessarily rainbow) copies of $\mathcal{G}_\ell$, for some constant $\eta_1>0$.
Fix a copy $\mathcal{G}^*$ of $\mathcal{G}_\ell$ in $\mathcal{H}[\bigcup_{i\in U}V_i]$.
For each copy $\mathcal{G}'$ of $\mathcal{G}_\ell$, let $\Phi_{\mathcal{G}'}$ be an isomorphism from
$\mathcal{G}'$ to $\mathcal{G}^*$.
Note that there are at most $|U|!$ such isomorphisms.
Therefore, there are at least $\eta_1 n^{v(\mathcal{G}_\ell)}(p_{\mathcal{G}_\ell}\cdot|U|!)^{-1}$ copies $\mathcal{G}'$ of $\mathcal{G}$ in $\mathcal{H}[\bigcup_{i\in U}V_i]$ that share the same isomorphism $\Phi_{\mathcal{G}'}$ but do not share the same vertex-set, where $p_{\mathcal{G}_\ell}$ is defined as the maximum number of edge-disjoint copies of $\mathcal{G}_\ell$ that can be embedded in $K_{v(\mathcal{G}_\ell)}^{(k)}$. Denote this family of copies by $\mathcal{J}$.
Let $\mathcal{R}$ be a $v(\mathcal{G}_\ell)$-uniform hypergraph on $\bigcup_{i\in U}V_i$ whose edge set
is $\{V(\mathcal{G}'): \mathcal{G}'\in  \mathcal{J}\}$.
It is clear that $e(\mathcal{R})\geq \eta_1 n^{v(\mathcal{G}_\ell)}(p_{\mathcal{G}_\ell}\cdot|U|!)^{-1}$.

Since there exists a homomorphism from $\mathcal{G}$ to $\mathcal{G}_\ell$, there is a positive integer $t$  such that $\mathcal{G}_\ell(t)$ contains $\mathcal{G}$ as a subgraph.
Let $v(\mathcal{G}_\ell)=\alpha$.
Given that $\pi\left(K^{(\alpha)}_{\alpha}(t)\right)=0$ (where $K^{(\alpha)}_{\alpha}(t)$ represents a complete $\alpha$-part  $\alpha$-uniform hypergraph with parts of equal size $t$), Theorem \ref{hypergraph-supersaturation} implies the existence of a constant $\eta_2>0$ such that $\mathcal{R}$ contains $\eta_2 n^{t\alpha}$ copies of $K^{(\alpha)}_{\alpha}(t)$.
As each $\mathcal{G}'\in  \mathcal{J}$ has the same $\Phi_{\mathcal{G}'}$, $\mathcal{H}[\bigcup_{i\in U}V_i]$ contains $\eta_2 n^{t\alpha}$ copies of $\mathcal{G}_\ell(t)$.
There exists a constant $C>0$ such that each copy of $\mathcal{G}$ is contained in at most $Cn^{t\alpha-v(\mathcal{G})}$ copies of $\mathcal{G}_\ell(t)$.
Consequently, $\mathcal{H}[\bigcup_{i\in U}V_i]$ contains $\frac{\eta_2}{C} n^{v(\mathcal{G})}$ copies of $\mathcal{G}$.
Then $\mathcal{H}[\bigcup_{i\in U}V_i]$ contains $\eta_2 n^{v(\mathcal{G})}(C\cdot p_{\mathcal{G}})^{-1}=\eta_3 n^{v(\mathcal{G})}$ copies of $\mathcal{G}$ such that any two copies of $\mathcal{G}$ do not share common set of vertex.
Denote by $\mathcal{J}'$ the set of these copies of $\mathcal{G}$.

Let $\mathcal{R}'$ be a $v(\mathcal{G})$-uniform hypergraph of $\bigcup_{i\in U}V_i$ whose edge set
is $\{V(\mathcal{G}''): \mathcal{G}''\in\mathcal{J}'\}$. 
Following the reasoning above and using the auxiliary hypergraph $\mathcal{R}'$, we conclude that there exists some $\eta_4>0$ such that $\mathcal{H}[\bigcup_{i\in U}V_i]$   (and hence $\mathcal{H}$ itself) contains
 $\eta_4 n^{v(\mathcal{G}(s))}$ copies of $\mathcal{G}(s)$.
By Lemma \ref{dense}, $\mathcal{H}$ contains at least $\eta_4 n^{v(\mathcal{G}(s))}/2=\eta n^{v(\mathcal{G}(s))}$ rainbow copies of  $\mathcal{G}(s)$.
\hfill$\square$ 
\bigskip

\noindent{\bf Proof of Theorem \ref{main-2}:} It is clear that $ex_\mathcal{F}(n,\mathcal{G}(s))\geq ex_\mathcal{F}(n,\mathcal{G})$.
Suppose, to the contrary, that $ex_\mathcal{F}(n,\mathcal{G}(s))\geq ex_\mathcal{F}(n,\mathcal{G})+cn^k$ for some $c>0$. 
For sufficiently large $n$, there is a rainbow $\mathcal{G}(s)$-free $k$-uniform hypergraph on $n$ vertices consisting of $ex_\mathcal{F}(n,\mathcal{G})+cn^k$ $\mathcal{F}$-copies.
By Proposition \ref{dense}, this is impossible.
\hfill$\square$ 
\bigskip

\section{Proofs of other results}\label{section-thm-3}

\noindent{\bf Proof of Theorem \ref{thm-triangle}}:
First, we claim that the following results hold for any two $k$-uniform hypergraphs $\mathcal{F}'$ and $\mathcal{G}'$.
\begin{enumerate}
    \item For any $k$-uniform hypergraph  $\mathcal{G}''$ containing $\mathcal{G}'$ as a subgraph, $ex_{\mathcal{F}'}(n,\mathcal{G}'')\geq ex_{\mathcal{F}'}(n,\mathcal{G}')$.
    \item For any $k$-uniform hypergraph  $\mathcal{F}''$ containing $\mathcal{F}'$ as a subgraph, $ex_{\mathcal{F}'}(n,\mathcal{G}')\geq ex_{\mathcal{F}''}(n,\mathcal{G}')$.
\end{enumerate}
Since $\mathcal{G}$ contains a copy of $H_3^{(k)}$ and there is a homomorphism form $\mathcal{G}$ to $\mathcal{F}$,  it follows that $v(\mathcal{F})\geq k+1$ and 
$$ex_{\mathcal{F}}(n,\mathcal{G})\geq ex_{K_{v(\mathcal{F})}^{(k)}}(n,\mathcal{G})\geq ex_{K_{v(\mathcal{F})}^{(k)}}(n,H_3^{(k)}).$$
Therefore, in combination with Theorem \ref{main-1}, it suffices to show that $ex_{K_r^{(k)}}(n,H_3^{(k)})\geq n^{k-o(1)}$ when $r\geq k+1$. 

By Theorem \ref{thm-Alon-Shapira} and Remark \ref{remark-1}, there exists an $n$-vertex $r$-uniform hypergraph $\mathcal{H}$ with $e(\mathcal{H})=n^{k-o(1)}$  such that any two edges intersect in at most $k-1$ vertices and any $3r-2k+1$ vertices span at most two edges.
Define $\mathcal{H}^*$ as the $k$-uniform hypergraph where $V(\mathcal{H}^*)=V(\mathcal{H})$ and $E(\mathcal{H}^*)$ consists of all $k$-subsets belonging to some edge of $\mathcal{H}$.
It is clear that each edge in $\mathcal{H}$ corresponds to a complete $k$-uniform hypergraph $K_r^{(k)}$ in $\mathcal{H}^*$.
Since any two edges in $\mathcal{H}$ intersect in at most $k-1$ vertices, it follows that $\mathcal{H}^*$ consists of $n^{k-o(1)}$ edge-disjoint copies $K_r^{(k)}$.
To complete the proof, we only need to show that $\mathcal{H}^*$ contains no rainbow $H_3^{(k)}$. 
Otherwise, there are three $K_r^{(k)}$-copies in $\mathcal{H}^*$, say $A_1,A_2,A_3$, and a vertex subset $S\subseteq V(\mathcal{H}^*)$ of size $s=k+1$ such that each  $A_i$  intersects $S$ in one edge.
Since $A_i$ and $A_j$ are edge-disjoint for $i\neq j$, it follows that $V(A_i)-S$ and $V(A_j)-S$ are disjoint.
Hence, $R=V(A_1\cup A_2\cup A_3)$
is a $(3r-2k+1)$-subset of $V(\mathcal{H})$ and $\mathcal{H}[R]$ contains at least three edges, a contradiction.
\hfill$\square$

\bigskip

\noindent{\bf Proof of Theorem \ref{thm-r-part}}: 
The lower bound is provided by Lemma \ref{dense}. For the upper bound, suppose for contradiction that there exists $\epsilon>0$ such that 
$$ex_{\mathcal{F}}(n,e(\mathcal{G}))\geq e(\mathcal{F})^{-1}ex_k(n,\mathcal{G})+\epsilon n^k$$ 
for infinitely many $n$.
For each such integer $n$, there exists an $n$-vertex $k$-uniform hypergraph $\mathcal{H}_n$ that consists of at least
$e(\mathcal{F})^{-1}ex_k(n,\mathcal{G})+\epsilon n^k$ $\mathcal{F}$-copies, with no rainbow copy of $\mathcal{G}$.
Then $e(\mathcal{H}_n)\geq ex_k(n,\mathcal{G})+\epsilon n^k$.
By Theorem \ref{hypergraph-supersaturation}, there exists a $\beta>0$ such that $\mathcal{H}_n$ contains $\beta n^{v(\mathcal{G})}$ copies of $\mathcal{G}$.
However, by Lemma \ref{clm-thm1-1}, $\mathcal{H}_n$ must contain a rainbow copy of $\mathcal{G}$, a contradiction.

We now show that if $\chi(\mathcal{F})=k$, then  $ex_{\mathcal{F}}(N,\mathcal{G})= e(\mathcal{F})^{-1}ex_k(N,\mathcal{G})+o(n^k)$ holds for sufficiently large $N$.
It is suffices to show that $ex_{\mathcal{F}}(N,\mathcal{G})\geq e(\mathcal{F})^{-1}ex_k(N,\mathcal{G})+o(n^k)$. 
Choose two integers $s,n$ such that $n=\lfloor N/s \rfloor$ and $1\ll s,n\ll N$.
Since there is no homomorphism form $\mathcal{G}$ to $\mathcal{F}$, it follows that $\chi(\mathcal{G})>k$, and thus 
$\pi(\mathcal{G})=\lim_{p\rightarrow \infty}ex(p,\mathcal{G})/{p\choose k}>0$. Then we can choose an $n$-vertex $\mathcal{G}$-free $k$-uniform hypergraph 
$\mathcal{H}^{ex}_n$ with $$e(\mathcal{H}^{ex}_n)=\pi(\mathcal{G}){n\choose k}+o(n^k)=\frac{\pi(\mathcal{G})n^k}{k!}+o(n^k).$$
It is clear that the $s$-blowup $\mathcal{H}^{ex}_n(s)$ of $\mathcal{H}^{ex}_n$ is also $\mathcal{G}$-free (say each vertex $x\in V(\mathcal{H}^{ex}_n)$ is expanded to an $s$-set $S_x$). To see this, we suppose to the contrary that $\mathcal{G}'$ is a copy of $\mathcal{G}$ in  $\mathcal{H}^{ex}_n(s)$. Let $U=\{x\in V(\mathcal{H}^{ex}_n):S_x\cap V(\mathcal{G}')\neq \emptyset\}$.
It is clear that there is a homomorphism from $\mathcal{G}$ to $\mathcal{H}^{ex}_n[U]$.
However, since $n$ is sufficiently large, the Blowup Theorem implies $\mathcal{H}^{ex}_n$ contains a copy of $\mathcal{G}$, a contradiction.

Note that each edge $e$ of $\mathcal{H}^{ex}_n$ is replaced by a copy of the complete $r$-part $r$-uniform hypergraph with parts of equal size $s$ in $\mathcal{H}^{ex}_n(s)$ (denoted by $\mathcal{B}_e$), and for any two edges $f,g$ of $\mathcal{H}^{ex}_n$, $\mathcal{B}_f$ and $\mathcal{B}_g$ are edge-disjoint.
The $sn$-vertex $k$-uniform hypergraph $\mathcal{H}^{ex}_n(s)$ can be regarded as a graph  consisting of $e(\mathcal{H}^{ex}_n)$ pairwise edge-disjoint copies of the complete $r$-part $r$-uniform hypergraph with parts of equal size $s$.

For each $e\in \mathcal{H}^{ex}_n$, since $\mathcal{B}_e$  is a complete $r$-part $r$-uniform hypergraph with parts of equal size $s$, each edge in $\mathcal{B}_e$ is contained in the same number of copies of $\mathcal{F}$.
Hence, by Lemma \ref{lem-two-packings}, each $\mathcal{B}_e$ contains $\frac{s^k}{e(\mathcal{F})}+o((ks)^k)$ edge-disjoint copies of $\mathcal{F}$, and  hence $\mathcal{H}^{ex}_n(s)$ contains 
$$e(\mathcal{H}^{ex}_n)\cdot\left[\frac{s^k}{e(\mathcal{F})}+o((ks)^k)\right]=\frac{\pi(\mathcal{G})(sn)^k}{k!}+o((sn)^k)=\frac{\pi(\mathcal{G})N^k}{k!}+o(N^k)$$
edge-disjoint copies of $\mathcal{F}$.
Since $\mathcal{H}^{ex}_n(s)$ is $\mathcal{G}$-free, $\mathcal{H}^{ex}_n(s)$ does not contain rainbow $\mathcal{G}$.
Consequently, 
$$ex_{\mathcal{F}}(N,\mathcal{G})\geq ex_{\mathcal{F}}(ns,\mathcal{G})\geq\frac{\pi(\mathcal{G})N^k}{k!}+o(N^k)=\frac{ex(N,\mathcal{G})}{e(\mathcal{F})}+o(N^k).$$ 
\hfill$\square$

\bigskip

\noindent{\bf Proof of Proposition \ref{prop-chara-upper}}:
The first statement is straightforward. We now prove the second statement.

For sufficiency, assume that there exists a real number $\epsilon>0$ such that for infinitely many $n$, every graph $\mathcal{H}_n\in EX_k(n,\mathcal{G})$ satisfies 
\begin{align}\label{ineq-prop-2}
\nu_{\mathcal{H}_n}(\mathcal{F})\leq e(\mathcal{F})^{-1}ex_k(n,\mathcal{G})-\epsilon n^k.
\end{align}
We show that $ex_\mathcal{F}(n,\mathcal{G})$ does not attain the upper bound in Ineq.~(\ref{lower-upper}).
For each integer $n$ satisfying Ineq.~(\ref{ineq-prop-2}), let $\mathcal{R}_n$ be an $n$-vertex rainbow $\mathcal{G}$-free $k$-uniform hypergraph consisting of $ex_\mathcal{F}(n,\mathcal{G})$ $\mathcal{F}$-copies.
Assume toward a contradiction that
$ex_\mathcal{F}(n,\mathcal{G})= e(\mathcal{F})^{-1}ex_k(n,\mathcal{G})+o(n^k)$. Then 
$e(\mathcal{R}_n)=ex_k(n,\mathcal{G})+o(n^k)$.
By Lemma \ref{clm-thm1-1}, there are $O(n^{k-1})$ copies of $\mathcal{G}$ in $\mathcal{R}_n$.
By Theorem \ref{hypergraph-supersaturation}, 
we can delete at most $o(n^{k})$ edges from $\mathcal{R}_n$ such that the resulting graph, denoted by $\mathcal{R}'_n$, is $\mathcal{G}$-free.
Since the extremal graphs for $\mathcal{G}$ are stable, there exists an $\mathcal{H}_n\in EX_k(n,\mathcal{G})$ such that  $\mathcal{H}_n$ can be obtained form $\mathcal{R}'_n$ by deleting or adding at most $o(n^k)$ edges.
It follows that $\mathcal{H}_n$ can be obtained form $\mathcal{R}_n$ by modifying at most $o(n^k)$ $\mathcal{F}$-copies. Therefore, $\mathcal{H}_n$ 
contains at least $ex_\mathcal{F}(n,\mathcal{G})-o(n^k)=e(\mathcal{F})^{-1}ex_k(n,\mathcal{G})+o(n^k)$ $\mathcal{F}$-copies, contradicting Ineq.~(\ref{ineq-prop-2}).

For necessity, we assume  that $ex_\mathcal{F}(n,\mathcal{G})$ does not attain the upper bound in Ineq.~(\ref{lower-upper}).
In other words, there exists $\epsilon>0$ such that $ex_\mathcal{F}(n,\mathcal{G})\leq e(\mathcal{F})^{-1}ex_k(n,\mathcal{G})-\epsilon n^k$ for infinitely many $n$.
This implies that for each such $n$, every $\mathcal{H}_n\in EX_k(n,\mathcal{G})$ satisfies $\nu_{\mathcal{H}_n}(\mathcal{F})\leq e(\mathcal{F})^{-1}ex_k(n,\mathcal{G})-\epsilon n^k.$ 
\hfill$\square$

\bigskip

\noindent{\bf Proof of Theorem \ref{thm-fano}}: 
The proof depends on the stability of the extremal graphs of 
$\mathcal{G}\in\{\mathbb{F},\mathcal{C}_3^{(2k)},\mathcal{B}_3^{(3)},\mathcal{B}_4^{(4)}\}$.
We first introduce extremal graphs below.
\begin{enumerate}
    \item Let $\mathcal{A}_n^{(3)}$ denote the $3$-uniform hypergraph obtained by partitioning $n$-vertex set into two of size $\lfloor n/2\rfloor$ and $\lceil n/2\rceil$,  and taking all edges intersecting each parts in nonempty sets.
    It has been proved that $EX_3(n,\mathbb{F})=\{\mathcal{A}_n^{(3)}\}$ for sufficiently large $n$, and the stability theorem exists (see \cite{C-Furedi,Fano-stable}).
    
    \item Let $\mathcal{T}_n^{(3)}$ represents the $3$-uniform hypergraph obtained by partitioning $n$-vertex set into two parts $X,Y$ with $||X|-2n/3|<1$, and taking all edges with two vertices in $X$. 
    It has been proved that $EX_3(n,\mathcal{B}_3^{(3)})$ consists of all such $\mathcal{T}_n^{(3)}$ for sufficiently large $n$, and the stability theorem exists (see \cite{F32-1,F32-2,F32-3}).
    
    \item Let $\mathcal{B}^{(2k)}(n,t)$ denote the $2k$-uniform hypergraph obtained by partitioning an $n$-vertex set into two parts with size $n/2-t$ and $n/2+t$, and taking as  all $2k$-tuples with odd intersecting with each part. Let $\mathscr{B}_n^{(2k)}$ denote the set of such graphs $\mathcal{B}^{(2k)}(n,t)$ of maximum edges.
    It has been proved that $EX_{2k}(n,\mathcal{C}_3^{(2k)})=\mathscr{B}_n^{(2k)}$ and $EX_{4}(n,\mathcal{B}_4^{(4)})=\mathscr{B}_n^{(4)}$ for sufficiently large $n$, and the stability theorems exist (see \cite{C3-2k,B44}).
\end{enumerate}

If $\mathcal{F}$ is a $3$-uniform hypergraph and $\mathcal{F}$ is not bipartite, then $\mathcal{A}_n^{(3)}$ does not contain $\mathcal{F}$ as a subgraph for any $n$.
By the second statement in Proposition \ref{prop-chara-upper}, $ex_{\mathcal{F}}(n,\mathbb{F})$ can not attain the upper bound in Ineq.~(\ref{lower-upper}).
If $\mathcal{F}$ is a $3$-uniform hypergraph such that for every independent set $S$ of $\mathcal{F}$, $\mathcal{F}-S$ is not an empty graph, then $\mathcal{T}_n^{(3)}$ does not contain $\mathcal{F}$ as a subgraph for any $n$, and hence $ex_{\mathcal{F}}(n,\mathcal{B}_3^{(3)})$ can not attain the upper bound in Ineq.~(\ref{lower-upper}).
If $\mathcal{F}$ is a $2k$-uniform hypergraph and $\mathcal{F}$ is not odd, then every graph in $\mathscr{B}_n^{(2k)}$ does not contain $\mathcal{F}$ as a subgraph for any $n$. Hence, $ex_{\mathcal{F}}(n,\mathcal{C}_3^{(2k)})$ and $ex_{\mathcal{F}}(n,\mathcal{B}_4^{(4)})$ can not attain the upper bound in Ineq.~(\ref{lower-upper}).

If $\mathcal{F}$ is a bipartite $3$-uniform hypergraph, then each edge in $\mathcal{A}_n^{(3)}$ (for even $n$) is contained in the same number of copies of $\mathcal{F}$.
By Lemma \ref{lem-two-packings} and the second statement in Proposition \ref{prop-chara-upper}, we have that $ex_{\mathcal{F}}(n,\mathbb{F})$ attains the upper bound in Ineq.~(\ref{lower-upper}).
If $\mathcal{F}$ is a $3$-uniform hypergraph that contains an independent set $S$ such that every edge of $\mathcal{F}$ has nonempty intersection with $S$, then every edge in $\mathcal{T}_n^{(3)}$ is contained in the same number of copies of $\mathcal{F}$.
Hence, by Lemma \ref{lem-two-packings} and the second statement in Proposition \ref{prop-chara-upper}, $ex_{\mathcal{F}}(n,\mathcal{B}_3^{(3)})$ attains the upper bound in Ineq.~(\ref{lower-upper}).

Next, we show that if $\mathcal{F}$ is an odd $2k$-uniform hypergraph, then $ex_{\mathcal{F}}(n,\mathcal{C}_3^{(2k)})$ and $ex_{\mathcal{F}}(n,\mathcal{B}_4^{(4)})$ attain the upper bound in Ineq.~(\ref{lower-upper}).
It is suffices to show that every graph in $\mathscr{B}_n^{(2k)}$ contains $e(\mathcal{F})^{-1}ex_{2k}(n,\mathcal{G})+o(n^{2k})$ $\mathcal{F}$-copies.
Keevash and Sudakov \cite{C3-2k} showed that for each $\mathcal{B}(n,t^*)\in \mathscr{B}_n^{(2k)}$, $0\leq t^*<\sqrt{2kn}$. 
Let $X,Y$ be two parts of $\mathcal{B}(n,t^*)$ with $|X|=n/2-t^*$ and $|Y|=n/2+t^*$.
Let $Y'$ be a subset of $Y$ with $|Y-Y'|=|X|$.
Then $|Y'|<\sqrt{8kn}$.
Then 
$$e(\mathcal{B}(n,t^*)-Y')\geq e(\mathcal{B}(n,t^*))-\left[{n \choose 2k}-{(n-|Y'|)\choose 2k}\right]=e(\mathcal{B}(n,t^*))-o(n^{2k}).$$
Since $|X|=|Y-Y'|$, each edge in $e(\mathcal{B}(n,t^*)-Y')$ is contained in the same number of copies of $\mathcal{F}$.
By Lemma \ref{lem-two-packings}, 
$$\nu_{\mathcal{B}(n,t^*)}(\mathcal{F})\geq \nu_{\mathcal{B}(n,t^*)-Y'}(\mathcal{F})=\frac{e(\mathcal{B}(n,t^*)-Y')}{e(\mathcal{F})}-o(n^{2k})=\frac{e(\mathcal{B}(n,t^*))}{e(\mathcal{F})}-o(n^{2k}).$$
Therefore, there are 
$e(\mathcal{F})^{-1}e(\mathcal{B}(n,t^*))-o(n^{2k})$ $\mathcal{F}$-copies in every $\mathcal{B}(n,t^*)\in \mathscr{B}_n^{(2k)}$.
\hfill$\square$

\bigskip 

\noindent{\bf Proof of Corollary \ref{coro-refine-simple}}: 
The sufficiency holds due to Eq. \eqref{eq-intro-1}.
We now give a short proof of necessity.
Assume that $\chi(G)=r+1\geq 3$.
For any $n$, let $t=\lfloor n/r\rfloor$ and $n'=rt$.
For any graph $F$ with $\chi(F)\leq r$, observe that every edge of $T_{n',r}$ lies in the same number of copies of $F$. By Lemma \ref{lem-two-packings}, the Tur\'{a}n graph $T_{n',r}$ contains $\frac{e(T_{n',r})}{e(F)}+o(n^2)$ $F$-copies.
Since $T_{n',r}$ is $G$-free, it follows that 
$$ex_F(n,G)\geq \frac{e(T_{n',r})}{e(F)}+o(n^2)=\frac{(\chi(G)-2)n^2}{2e(F)(\chi(G)-1)}+o(n^2).$$
Ineq. (\ref{lower-upper}) implies that equality holds in the above expression.
\hfill$\square$



\end{document}